\numberwithin{equation}{section}
\newcommand{\mc}[1]{{\mathcal{#1}}}
\newcommand{\got}[1]{{\mathfrak{#1}}}
\newcommand{\db}[1]{{\mathbb{#1}}}
\newcommand{\pa}{\partial}
\newcommand{\R}{\ensuremath{\mathbb{R}}}\newcommand{\C}{\ensuremath{\mathbb{C}}}
    \newcommand{\N}{\ensuremath{\mathbb{N}}}
\newcommand{\g}{\ensuremath{\got{g}}}
\newtheorem{Remark}{Remark}
\newtheorem{Theorem}{Theorem}
\newtheorem{Proposition}{Proposition}
\newtheorem{lemma}{Lemma}
\theoremstyle{definition} 
\def\ii{\operatorname{i}}
\newcommand{\Ka}{K\"ahler}
\newcommand{\mr}[1]{{\mathrm{#1}}}
\newcommand{\dd}{\operatorname{d}}
\newcommand{\h}{\ensuremath{\got{h}}}
\renewcommand{\Re}{\operatorname{Re}}
\renewcommand{\Im}{\operatorname{Im}}
\newcommand{\tr}{\operatorname{tr}}
\newcommand{\Sp}{\ensuremath{{\mbox{\rm{Sp}}(n,\R)}}}
\begin{document}
\title[Connection matrices]{ Connection matrices on the Siegel-Jacobi upper
  half space and extended Siegel-Jacobi upper   half space}
\author{Elena Mirela Babalic}
\address[Elena Mirela Babalic]{``Horia Hulubei''  National
 Institute for Physics and Nuclear Engineering\\
         Department of Theoretical Physics\\
         PO BOX MG-6, Bucharest-Magurele, Romania}
\email{mbabalic@theory.nipne.ro}

\author{Stefan  Berceanu}
\address[Stefan  Berceanu]{``Horia Hulubei'' National
 Institute for Physics and Nuclear Engineering\\
         Department of Theoretical Physics\\
         PO BOX MG-6, Bucharest-Magurele, Romania}
       \email{Berceanu@theory.nipne.ro}

       \enlargethispage{1cm}
        \begin{abstract}
 The inverse of the  metric matrices  on  the  Siegel-Jacobi upper half
 space ${\mc{X}}^J_n$,   invariant
  to the  restricted real Jacobi group $G^J_n(\R)_0$ and extended  Siegel-Jacobi  $\tilde{{\mc{X}}}^J_n$
  upper half  space, invariant to the action  of the real Jacobi $G^J_n(\R)$, are  presented. The results are relevant for 
  Berezin  quantization of  the manifolds  ${\mc{X}}^J_ n$ and $\tilde{\mc{X}}^J_n$.
  Explicit calculations in  the case
  $n=2$ are given. 
\end{abstract}
\subjclass{81Q70, 53C05, 37J55, 53D15}
\keywords{Berry phase, Berry connection,  Connection matrix,  Jacobi group, invariant metric,  Siegel--Jacobi disk, Siegel--Jacobi upper
  half plane,  extended  Siegel--Jacobi upper half plane, almost cosymplectic manifold}
\maketitle
\tableofcontents

\section{Introduction }

The real Jacobi group 
of  degree $n$ is defined as $G^J_n(\R):=\mr{H}_n(\R)\rtimes\Sp$, where
$\mr{H}_n(\R)$ denotes the   real Heisenberg group
\cite{bs,ez,tak}. 
The Siegel-Jacobi  upper half space is the  $G^J_n(\R)$-homogeneous
manifold 
$\mc{X}^J_n:=\frac{G^J_n(\R)}{\rm{U}(n)\times \R}\approx \mc{X}_n\times
\R^{2n}$, 
where  $\mc{X}_n$ denotes the Siegel upper half space realized as 
$ \frac{\Sp}{\rm{U}(n)}$ \cite[page 398]{helg}. Also $\mc{X}^J_n=
\frac{G^J_n(\R)_0}{\rm{U}(n)}$, where  $G^J_n(\R)_0$ denotes the
restricted  real Jacobi group \cite[\S 5]{nou}.

  The Jacobi
group 
$G^J_n:=\mr{H}_n\rtimes{\rm Sp}(n,\R)_{\C}$,
where ${\rm Sp}(n,\R)_{\C}:= {\rm Sp}(n,\C)\cap {\rm U}(n,n)$,
is  also studied in Mathematics,    Mathematical
Physics and Theoretical Physics,   together  with the 
 $G^J_n$-homogeneous Siegel-Jacobi ball $\mc{D}^J_n\approx
\C^n\times\mc{D}_n$,   
where $\mc{D}_n$ denotes the
Siegel ball realized as $\frac{{\rm Sp}(n,\C)}{{\rm U}(n)}$
\cite[page 399]{helg}.

The extended Siegel-Jacobi upper half space is  $\tilde{\mc{X}}^J_n\!:=\!\frac{G^J_n(\R)}{\rm{U}(n)}\approx \mc{X}_n\times
\R^{2n+1}$ \cite{SB21}.

 It
is well known that $G^J_n(\R)$, ${\rm Sp}(n,\R)$, $\mr{H}_n(\R)$,
$\mc{X}^J_n$
 and $\mc{X}_n$ are isomorphic with $G^J_n$, ${\rm Sp}(n,\R)_{\C}$, $\mr{H}_n$,
$\mc{D}^J_n$, 
respectively  $\mc{D}_n$ \cite{SB21}.    

The dimensions of the enumerated manifolds are: $\dim{\Sp}=2n^2+n$,
$\dim{\mr{H}_n(\R)}=2n+1$,
  $\dim{G^J_n(\R)}=(2n+1)(n+1)$,   $\dim{G^J_n(\R)_0}=\dim{G^J_n(\R)}-1$,
  $\dim{\mr{U}(n)}=n$,
  $\dim{\mc{X}^J_n}=n(n+3)$,
    $\dim{\tilde{\mc{X}}^J_n}= \dim{\mc{X}^J_n}+1$,
    $\dim{\mc{X}_n}=n(n+1)$. In  particular,  $\text{dim}~ \mc{X}^J_1 =4$,
     $\text{dim~} ~ \tilde{\mc{X}}^J_1 =5, ~\text{dim~} \mc{X}^J_2
    =10, ~\text{dim~} \tilde{\mc{X}}^J_1 =11$.

The Jacobi group,  as  a unimodular group, is a non-reductive, algebraic group of Harish-Chandra
type.  
$\mc{D}^J_n$ is a
 partially bounded domain, non-symmetric,  a  Lu Qi-Keng manifold,  a
 projectively induced quantizable \Ka~manifold \cite{SB15}.

 In our papers \cite{SB15,SB16,SB17,SB20,SB20N} 
we studied invariant metrics to the action of the Jacobi group and   Berezin quantisation
   \cite{ber73,ber74,ber75,berezin,arr,alo}  on  
   Siegel-Jacobi upper half-plane $\mc{X}^J_1$, extended
   Siegel-Jacobi upper half-plane
  $\tilde{\mc{X}}^J_1$, 
   Siegel-Jacobi space  $\mc{X}^J_n$,  Siegel-Jacobi ball
   $\mc{D}^J_n$.  One important step in the investigation of Berezin
   quantisation  on
   $\mc{X}^J_n$ and   $\tilde{\mc{X}}^J_n$ is the calculation of the 
   inverse matrix of invariant metric to the real  Jacobi group on the
   mentioned manifolds. The aim of the present investigation of
   the inverse of the invariant metric on $\tilde{\mc{X}}^J_n$. Even
   in case $n=2$ the formulas  presented here  are quite complicated.  

   The new results contained in this paper  concern the matrix of the inverse
metric on the Siegel-Jacobi space $g^{-1}_{\mc{X}^J_n}(x,y,p,q)$ in Proposition
\ref{PR1} and  extended Siegel-Jacobi  space $g^{-1}_{\tilde{\mc{X}}^J_n}(x,y,p,q,\kappa)$  in
Proposition \ref{PR2}, which are particularised in Proposition \ref{PR3} to the
case $n\!=\!2$. Also in Proposition \ref{PR5} we calculate  $\text{det}
g'_{\tilde{\mc{X}}^J_2}(x,y,q,p,\kappa)$ and the inverse of the 
same  matrix for the  case $n\!=\!2$. Remarks \ref{R1}  and \ref{R2}
 systematize 
the results of Proposition \ref{PR5}. 
 
\textbf{Notation.} 
We denote by $\mathbb{R}$, $\mathbb{C}$, $\mathbb{Z}$ and $\mathbb{N}$ 
 the field of real numbers, the field of complex numbers,
the ring of integers,   and the set of non-negative integers, respectively. We denote the imaginary unit
$\sqrt{-1}$ by~$\ii$, the real and imaginary parts of a complex
number $z\in\C$ by $\Re z$ and $\Im z$ respectively, and the complex 
conjugate of $z$ by $\bar{z}$.
 We denote by ${\dd }$ the differential. 
We use Einstein's summation convention, i.e.  repeated indices are
implicitly summed over.  The set of vector fields (1-forms) on real
manifolds  is denoted
by $\got{D}^1$ (respectively $\got{D}_1$). We denote a mixed tensor
contravariant of degree $r$ and covariant of degree $s$ by $\got{D}^r_s=\got{D}^r\times\got{D}_s$,
where $\got{D}^r=\underbrace{\got{D}^1\times \dots \times
  \got{D}^1}_{r}$ and $\got{D}_s=\underbrace{\got{D}_1\times \dots \times
  \got{D}_1}_{s}$ \cite[pages 13-17]{helg}. 
  If we
denote with Roman  capital letteres the Lie  groups, then their
associated Lie algebras are denoted with the corresponding lower-case
letteres.   If $\got{H}$ is a Hilbert space, then we adopt the
  convention  that the scalar product $(\cdot,\cdot)$ on
  $\got{H}\times\got{H}$ is antilinear in the first
factor 
$(\lambda a,b)=\bar{\lambda}(a,b),\quad \lambda \in \C\setminus \{0\}
$. If $\pi$ is a representation of a Lie grup $G$ 
on the Hilbert $\got{H}$  and $X\in\got{g}$,  then we denote
$\bf{X}:=\dd \pi (X)$   \cite{SB03,SB14,perG}. The interior  product $i_X\omega$ (interior multiplication or contraction)
of the differential form $\omega$
with $X\in\got{D}^1$ is denoted $X\lrcorner \omega$. We
denote by $M(n,m,\db{F})$ the set of $n\times m$ matrices with elements
in the field $\db{F}$ and  $M(n,\db{F})$ denotes $M(n,n,\db{F})$. If $X\in M(n,m,\db{F})$, then $X^t$ denotes the
transpose of $X$. We denote by $MS(n,\db{F})=\{X\in M(n,\db{F})|X=X^t\}$. The
 conjugate transpose (or hermitian transpose)  of $ A\in
 M(q,\C)$  is  $A^H:=\bar{A}^t$,  also denoted $A^*$, $A^{\dagger}$, $A^+$.
 If $f$ is a function on $\C^n$, we write the
total differential of $f$ as $\dd f\!=\! \pa f+\bar{\pa} f$, with $\pa f
\!=\!\sum_1^n{\pa_{\alpha}f}\dd z_{\alpha}$,
  where $\pa_{\alpha}f\!=\!\frac{\pa f}{\pa z_{\alpha}} $ \cite[page
  6]{GH}.
  If $F=F_{ij}\in M(n,\db F)$,  where $\db F=\R, ~\C$, we denote by $F^{ij}$
  the elements of the inverse $F^{-1}$,
  i.e. $F_{ik}F^{kj}=\delta_{ij}$. If $A,~B$ are matrices, then
   $A\otimes B$
  denotes the Kronecker product of $A,B$ \cite[page 3]{lutke}.

  \section{Preparation}\label{PR}
Following \cite[\S 5]{nou}, we consider the restricted real group
$G^J_n(\R)_0$ consisting of elements of the form $g=(M,X)$,
where $M\in \rm{Sp}(n,\R), X=(\lambda,\mu)$, $\lambda,~\mu\in M(1,n,\R)$.

We consider the
Siegel-Jacobi upper half space 
 $\mc{X}_n$ realized as in \eqref{uvpq}.

 We introduce  for $\mc{X}^J_n$ the analog
parametrization  used in \cite[page 7]{bern84}, \cite[page 11]{bs}, \cite[\S~38]{cal3} for  $\mc{X}^J_1$
\begin{equation}\label{uvpq}u:=pv+q, ~v:=x+\ii y,~v=v^t,~
  y>0,~p,q\in M(1,n,\R).\end{equation}

It should be noted that there is an isomorphism
$G^J_n(\R)\ni(M,X,K)\rightarrow(M,X)\in  G^J_n(\R)_0$ through which the
action of   $G^J_n(\R)_0$ on  $\mc{X}^J_n$ can be defined as in
\cite[Proposition\,\,\!2]{nou}.

    It is easy to prove  that   \cite[Lemma 8]{SB20N}
\begin{lemma}\label{actM}
a)  If $\mc{X}_n\ni v=x+\ii y$, then the action of $G^J_n(\R)_0$ on
$\mc{X}^J_n${\rm :}  $(M,X)\times
  (v',u')\rightarrow (v_1,u_1)$, where $M\in\Sp$  has the expression 
  \begin{equation}\label{Real}
M=\left(\begin{array}{cc}a&b\\c&d\end{array}\right),\quad a, b,c ,d \in
M(n,\R),
\end{equation}
is given by the formulas
 \begin{subequations}\label{exac}
    \begin{align}
      v_1 & =(av'+b)(cv'+d)^{-1}=(v'c^t+d^t)^{-1}(v'a^t+b^t),\label{exacc1}\\
      u_1 & = (u'+\lambda v'+\mu)(cv'+d)^{-1}. \label{exac2}
    \end{align}
  \end{subequations}

  b)  For $\lambda, \mu\in M(1,n,\R)$, let us consider $(p,q)$ such that 
  \begin{subequations}\label{pqlm}
    \begin{align}
  (p,q) & =(\lambda,\mu)M^{-1}=(\lambda d^t-\mu c^t, -\lambda b^t+ \mu a^t),\\
      (\lambda,\mu) & =(p,q)M=(pa+qc,pb+qd),
~~~ p,q,\lambda,\mu\in M(1,n,\R).
\end{align}
  \end{subequations} 
 Then  the action of $G^J_n(\R)_0$ on $\mc{X}^J_n${\rm :}   $(M,X)\times
  (x',y',p',q')\rightarrow (x_1,y_1,p_1,q_1)$ is given by \eqref{exacc1}, while
  \begin{equation}
     (p_1,q_1)=(p,q)+(p',q')\left(\begin{array}{cc}a & b \\c &
                                                          d\end{array}\right)^{-1}=
                                                      (p+p'd^t-q'c^t,q-p'b^t+q'a^t).\label{p1q1}
                                                            \end{equation}
 c) The action of $G^J_n(\R)$ on $\tilde{\mc{X}}^J_n\approx
 \mc{X}^J_n\times \R${\rm :}  
\begin{subequations}\label{actc}
\begin{align}
 (M,(\lambda,\mu),\kappa)\times (v',u',\kappa') &\rightarrow (v_1,u_1,\kappa_1),\\
(M,(\lambda,\mu),\kappa)\times
  (x',y',p',q',\kappa')& \rightarrow (x_1,y_1,p_1,q_1,\kappa_1)
\end{align}
\end{subequations}
is given by
  \eqref{exac}, \eqref{p1q1} and
  \[
    \kappa_1=\kappa+\kappa'+\lambda q'^t-\mu p'^t.
    \]
 d)  The $1$-form
  \begin{equation}\label{1FORMR}
    \lambda^R=\dd \kappa -p \dd q^t+q \dd p^t
  \end{equation}
  is invariant to the action \eqref{actc} of $G^J_n(\R)$ on
  $\tilde{\mc{X}}^J_n$.

  e) The action of $G^J_n(\R)$ on $G^J_n(\R)$ $$(M,(\lambda,\mu),\kappa)\times
  (S_n)'\rightarrow
  (S_n)_1,$$ is
  given in \eqref{exac1} for $X',Y'\in (S_n)',$
\begin{equation}\label{exac1}
    X_1-\ii Y_1=(y_1)^{\frac{1}{2}}\{(cx'+d)(y')^{-\frac{1}{2}}X'+c(y')^{\frac{1}{2}}Y'+
    \ii [c{y'}^{{\frac{1}{2}}}X'-(cx'+d)(y')^{-\frac{1}{2}}Y' ]\},
  \end{equation}
  while the other actions are
  given in {\emph{ a)-d)}}
  of the present Lemma.
\end{lemma}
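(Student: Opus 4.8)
The plan is to read the transformation laws of parts a)--c) directly off the multiplication in $G^J_n(\R)=\mr{H}_n(\R)\rtimes\Sp$ together with the coset realizations $\mc{X}^J_n=G^J_n(\R)_0/\mr{U}(n)$ and $\tilde{\mc{X}}^J_n=G^J_n(\R)/\mr{U}(n)$, and then to establish parts d)--e) by direct computation built on the symplectic relations. For $M=\left(\begin{smallmatrix}a&b\\c&d\end{smallmatrix}\right)\in\Sp$ with $J=\left(\begin{smallmatrix}\on&\un\\-\un&\on\end{smallmatrix}\right)$, the conditions $M^tJM=J=MJM^t$ give the symmetry of $a^tc$ and $b^td$, the identities $a^td-c^tb=\un$ and $d^ta-b^tc=\un$, and the explicit inverse $M^{-1}=\left(\begin{smallmatrix}d^t&-b^t\\-c^t&a^t\end{smallmatrix}\right)$ already recorded in \eqref{pqlm}; these are the only structural facts I will need.

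For part a), the law $v_1=(av'+b)(cv'+d)^{-1}$ is the standard fractional-linear action of $\Sp$ on $\mc{X}_n$ and $u_1=(u'+\lambda v'+\mu)(cv'+d)^{-1}$ is inherited from the Heisenberg factor; both drop out of writing the product $(M,X)\cdot(M',X')$ and projecting onto an $\mr{U}(n)$-section. The second expression for $v_1$ in \eqref{exacc1} I would get by clearing denominators, i.e.\ by checking $(v'c^t+d^t)(av'+b)=(v'a^t+b^t)(cv'+d)$: expanding both sides, the quadratic terms agree because $c^ta=a^tc$, the constant terms agree because $d^tb=b^td$, and the two mixed terms agree after substituting $d^ta=\un+b^tc$ and $a^td=\un+c^tb$, the symmetry $v'=v'^t$ being used throughout. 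For part b) I would impose the parametrization $u=pv+q$ of \eqref{uvpq}: putting $u'=p'v'+q'$ into $u_1$ gives $u_1=\bigl((p'+\lambda)v'+(q'+\mu)\bigr)(cv'+d)^{-1}$, and demanding $u_1=p_1v_1+q_1$ with $v_1$ as in \eqref{exacc1} forces $(p'+\lambda,q'+\mu)=(p_1,q_1)M$; solving with $M^{-1}$ and inserting $(\lambda,\mu)=(p,q)M$ from \eqref{pqlm} produces exactly \eqref{p1q1}.

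The computational core is parts c)--d). In c), the rule $\kappa_1=\kappa+\kappa'+\lambda q'^t-\mu p'^t$ is the Heisenberg two-cocycle appearing in the multiplication of $\mr{H}_n(\R)$, so it is read off from the same product computation used in a). For the invariance in d) I would treat \eqref{actc} as a diffeomorphism of $\tilde{\mc{X}}^J_n$ with the group element held fixed, so that $p,q,\lambda,\mu,\kappa$ are constants and $x',y',p',q',\kappa'$ are the coordinates to be differentiated. Writing $w'=(p',q')$ so that $-p'\,\dd q'^t+q'\,\dd p'^t=-w'J\,\dd w'^t$, the shift $(p_1,q_1)=(p,q)+w'M^{-1}$ of \eqref{p1q1} splits $-p_1\,\dd q_1^t+q_1\,\dd p_1^t$ into a quadratic piece $-w'M^{-1}J(M^{-1})^t\,\dd w'^t$ and a linear piece. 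The quadratic piece equals the original $-w'J\,\dd w'^t=-p'\,\dd q'^t+q'\,\dd p'^t$ because $M^{-1}$ is symplectic, $M^{-1}J(M^{-1})^t=J$, while the linear piece collapses to $\mu\,\dd p'^t-\lambda\,\dd q'^t$ once $(\lambda,\mu)=(pa+qc,pb+qd)$ is used; this is cancelled exactly by $\dd\kappa_1-\dd\kappa'=\lambda\,\dd q'^t-\mu\,\dd p'^t$ coming from the $\kappa$-law, so that $\dd\kappa_1-p_1\,\dd q_1^t+q_1\,\dd p_1^t=\dd\kappa'-p'\,\dd q'^t+q'\,\dd p'^t$, which is the invariance of \eqref{1FORMR}.

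For part e) the first move is to notice that, since $(cx'+d)+\ii cy'=cv'+d$, the right-hand side of \eqref{exac1} regroups as $X_1-\ii Y_1=(y_1)^{\frac{1}{2}}(cv'+d)(y')^{-\frac{1}{2}}(X'-\ii Y')$, exhibiting $cv'+d$ as the automorphy factor. I would then obtain this factor by differentiating \eqref{exacc1}, and reconcile the vielbein factors $(y_1)^{\frac{1}{2}}$ and $(y')^{-\frac{1}{2}}$ with the transformation law of $y=\Im v$ induced by \eqref{exacc1}. The main obstacle I anticipate is purely bookkeeping: because all the blocks $a,b,c,d$ and the matrix $v'$ are noncommuting $n\times n$ objects, every cancellation in d) and every regrouping in e) must be tied to the correct symplectic identity and to the symmetry $v'=v'^t$, so while no single step is deep, the matrix algebra is the place where errors would creep in.
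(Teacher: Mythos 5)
The paper does not actually prove Lemma \ref{actM}: it is quoted as ``easy to prove'' with a citation to Lemma 8 of \cite{SB20N} (and the action itself is defined via \cite[Proposition 2]{nou}), so there is no internal proof to compare against. Your reconstruction is correct and is the natural direct verification: the cross-multiplication check of the two expressions in \eqref{exacc1} using $a^tc=c^ta$, $b^td=d^tb$, $a^td-c^tb=d^ta-b^tc=\un$ is exactly right; the derivation of \eqref{p1q1} by substituting $u'=p'v'+q'$ into \eqref{exac2} and matching $u_1=p_1v_1+q_1$ gives $(p_1,q_1)=(p'+\lambda,q'+\mu)M^{-1}=(p,q)+(p',q')M^{-1}$ as claimed; and your proof of d), splitting $-p_1\,\dd q_1^t+q_1\,\dd p_1^t$ into a quadratic piece handled by $M^{-1}J(M^{-1})^t=J$ and a linear piece $\mu\,\dd p'^t-\lambda\,\dd q'^t$ cancelled by $\dd\kappa_1-\dd\kappa'$, is complete and correct. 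Parts a), c) and e) are sketched rather than carried out (the projection onto coset representatives and the Heisenberg cocycle are asserted, and in e) only the regrouping $X_1-\ii Y_1=(y_1)^{\frac12}(cv'+d)(y')^{-\frac12}(X'-\ii Y')$ is established), but that is proportionate to what the statement requires, and the regrouping identity itself checks out.
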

  If parameters $\alpha,~\gamma, ~\delta >0$, we have proved \cite[Theorem 1]{SB20N}:
\begin{Theorem}\label{PRR2}
      The  metric on $\mc{X}^J_n$, $G^J_n(\R)_0$-invariant
    to the action in {\emph{Lemma \ref{actM}}}, has the expressions
\begin{subequations}\label{MULTL}\begin{align}
      \dd s_{\mc{X}^J_n}^2(x,y,q,p)&\!=\!\alpha\tr[(y^{-1}\dd
                                      x)^2+(y^{-1}\dd y)^2]\nonumber\\  
        &\!+\!\gamma[\dd p
        (xy^{-1}x+yy^{-1}y)\dd p^t+\dd q y^{-1}\dd q^t +2\dd p
        xy^{-1} \dd q^t];\\
  \dd s_{\mc{X}^J_n}^2(x,y,\chi,\psi)&\!=\!\alpha\tr[(y^{-1}\dd
                                      x)^2+(y^{-1}\dd y)^2]\nonumber\\  
        &\!+\!\gamma[\dd \psi^t
        (xy^{-1}x\!+\!yy^{-1}y)\dd\psi\!+\!\dd\chi^ty^{-1}\dd\chi\!+\!2\dd\psi^t
        xy^{-1} \dd\chi];\\
  \dd s_{\mc{X}^J_n}^2(x,y,\xi,\rho)&\!=\!\alpha\tr[(y^{-1}\dd
                                      x)^2+(y^{-1}\dd y)^2]\nonumber \\
  &\!+\!\gamma[ \dd \xi y^{-1}\dd \xi^t +\dd \rho y^{-1}\dd \rho^t
+\rho y^{-1}\dd x y^{-1}(\rho y^{-1}\dd x)^t  \nonumber\\&\!+\!\rho y^{-1}\dd y
    y^{-1}(\rho y^{-1}\dd y)^t  
 -2\rho y^{-1}\dd xy^{-1}\dd \xi^t -2\rho y^{-1}\dd y^{-1}\dd \rho^t].
      \end{align}
      \end{subequations}

    \noindent  The  three parameter metric on $\tilde{\mc{X}}^J_n$, $G^J_n(\R)$-invariant
      to the action {\emph{c)}} in {\emph{Lemma \ref{actM}}},
      is \begin{equation}\label{BIGM}\begin{split}
        \dd s_{\tilde{\mc{X}}^J_n}^2(x,y,q,p,\kappa)&=
        \dd s_{\mc{X}^J_n}^2(x,y,q,p)+ \lambda_6^2\\
         & =\alpha\tr[(y^{-1}\dd
                                      x)^2+(y^{-1}\dd y)^2]\\  
        & + \gamma[\dd p
        (xy^{-1}x+yy^{-1}y)\dd p^t+\dd q y^{-1}\dd q^t +2\dd p
        xy^{-1} \dd q^t]\\
        & +\delta (\dd \kappa -p \dd q^t+q \dd p^t)^2.
      \end{split}
      \end{equation}
      \end{Theorem}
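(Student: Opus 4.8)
The plan is to exhibit each metric as a sum of pieces that are \emph{separately} invariant under the relevant group action, so that invariance of the whole follows block by block. Writing $v=x+\ii y$, the three blocks are the Siegel part $\alpha\tr[(y^{-1}\dd x)^2+(y^{-1}\dd y)^2]$, the Jacobi part (the $\gamma$-bracket in $\dd q$, $\dd p$), and the central part $\delta(\lambda^R)^2$ with $\lambda^R=\dd\kappa-p\dd q^t+q\dd p^t$. Since a linear combination of invariant symmetric $2$-tensors is again invariant, it suffices to treat each block, and all three will be realised as quadratic expressions in $1$-forms that transform in a controlled way under Lemma \ref{actM}, which is the engine of the argument.

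First I would record the differential transformation laws implied by \eqref{exac}--\eqref{p1q1}. Setting $J:=cv'+d$, $\bar J:=c\bar v'+d$ and using $v'=v'^t$ together with the symplectic relations $a^tc=c^ta$, $b^td=d^tb$, $a^td-c^tb=\un$ (and their transposes), I would prove the two identities $a-v_1c=(J^t)^{-1}$ and $\bar J^t(v_1-\bar v_1)J=v'-\bar v'$. The first gives $\dd v_1=(J^t)^{-1}\dd v'\,J^{-1}$, the second gives $y_1=(\bar J^t)^{-1}y'\,J^{-1}$, i.e. $y_1^{-1}=Jy'^{-1}\bar J^t$. For the Siegel block I would then establish the real symplectic identity $Jy'^{-1}\bar J^t=\bar Jy'^{-1}J^t$, which after expansion reduces to $cd^t=dc^t$ together with $v'y'^{-1}\bar v'=\bar v'y'^{-1}v'=x'y'^{-1}x'+y'$; feeding the three laws into $\tr(y^{-1}\dd v\,y^{-1}\dd\bar v)$ makes all automorphy factors telescope to the identity, so this trace---which equals $\tr[(y^{-1}\dd x)^2+(y^{-1}\dd y)^2]$---is invariant.

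For the Jacobi block I would introduce the row-vector $1$-form $\omega:=\dd p\,v+\dd q$. Differentiating \eqref{p1q1} at fixed group element gives $\dd p_1=\dd p'd^t-\dd q'c^t$ and $\dd q_1=-\dd p'b^t+\dd q'a^t$; combined with the computations $d^tv_1-b^t=v'J^{-1}$ and $c^tv_1-a^t=-J^{-1}$ (again pure symplectic algebra) this yields the clean law $\omega_1=\omega'J^{-1}$. Then $\omega_1 y_1^{-1}\bar\omega_1^t=\omega'J^{-1}(Jy'^{-1}\bar J^t)(\bar J^t)^{-1}\bar\omega'^t=\omega'y'^{-1}\bar\omega'^t$, so $\omega y^{-1}\bar\omega^t$ is invariant; expanding it via $\omega=(\dd p\,x+\dd q)+\ii\,\dd p\,y$ and using $x=x^t$ shows it equals the $\gamma$-bracket $\dd p(xy^{-1}x+yy^{-1}y)\dd p^t+\dd q\,y^{-1}\dd q^t+2\dd p\,xy^{-1}\dd q^t$. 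The central block needs no new work: $\lambda^R$ is invariant by part \emph{d)} of Lemma \ref{actM}, hence so is $\delta(\lambda^R)^2$. Summing the three invariant blocks proves \eqref{BIGM}, and dropping the $\delta$-term proves \eqref{MULTL}; the second and third forms in \eqref{MULTL} are this same tensor rewritten in the alternative fibre coordinates $(\chi,\psi)$ and $(\xi,\rho)$, so they follow by the substitutions relating those to $(p,q)$.

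The main obstacle is purely algebraic: coaxing the two transformation laws $a-v_1c=(J^t)^{-1}$ and $\omega_1=\omega'J^{-1}$, and the symplectic identity $Jy'^{-1}\bar J^t=\bar Jy'^{-1}J^t$, out of the raw formulas of Lemma \ref{actM}. Each requires inserting the correct symplectic relation at the right place and exploiting $v'=v'^t$, after which the automorphy factors cancel automatically; once these three identities are in hand the invariance of every block is immediate. If one further wants that $\alpha,\gamma,\delta$ parametrise \emph{all} invariant metrics, I would decompose the isotropy representation of $\mr{U}(n)$ (resp. $\mr{U}(n)\times\R$) on the tangent space into irreducibles and count invariants, which produces exactly these three free parameters.
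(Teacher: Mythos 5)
You should first note that this paper never proves Theorem \ref{PRR2} internally: it is imported verbatim from \cite[Theorem 1]{SB20N} (``we have proved\dots''), and the surrounding text only records that \eqref{MULTL} and \eqref{BIGM} generalize equations (5.25b) and (5.30) of \cite{SB19}. In that cited line of work invariance is not checked against the action at all; the metric is assembled from left-invariant one-forms obtained from the Maurer--Cartan form of $G^J_n(\R)$ --- which is why \eqref{BIGM} is written as $\dd s^2_{\mc{X}^J_n}+\lambda_6^2$, with $\lambda_6=\lambda^R$ exactly the invariant form \eqref{1FORMR} --- so invariance is built in by construction and the labour lies in computing the forms. Your argument is therefore a genuinely different, self-contained route: direct verification through automorphy factors. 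I checked its skeleton and it is sound. With $J=cv'+d$ one has $J^t(a-v_1c)=\un$, hence $\dd v_1=(J^t)^{-1}\dd v'J^{-1}$; the identity $\bar{J}^t(v_1-\bar{v}_1)J=v'-\bar{v}'$ gives $y_1^{-1}=Jy'^{-1}\bar{J}^t=\bar{J}y'^{-1}J^t$, the last equality reducing, as you say, to $cd^t=dc^t$ and $v'y'^{-1}\bar{v}'=\bar{v}'y'^{-1}v'=x'y'^{-1}x'+y'$; and $c^tv_1-a^t=-J^{-1}$, $d^tv_1-b^t=v'J^{-1}$ (both consequences of $a^tc=c^ta$, $b^td=d^tb$, $a^td-c^tb=\un$) give $\omega_1=\omega'J^{-1}$ for $\omega=\dd p\,v+\dd q$. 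After this, $\tr(y^{-1}\dd v\,y^{-1}\dd\bar{v})$ and $\omega y^{-1}\bar{\omega}^t$ telescope to their primed values, the real part of $\omega y^{-1}\bar{\omega}^t$ is the $\gamma$-bracket (the imaginary part $\dd p\,\dd q^t-\dd q\,\dd p^t$ vanishes as a symmetric tensor), the $\delta$-block is part d) of Lemma \ref{actM}, and the $(x,y,p,q)$-blocks stay invariant under the full $G^J_n(\R)$ because the centre acts trivially on those variables. What each approach buys: yours is elementary and verifiable line by line from the present paper alone; the one-form construction of \cite{SB19,SB20N} makes invariance tautological and simultaneously exhibits the full three-parameter family.

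Two points in your write-up are thinner than the rest. First, the phrase ``the three parameter metric'' carries a classification flavour (that every invariant metric is of this form); your isotropy-representation count is only announced, not performed, whereas in the cited framework this falls out of the invariant one-forms. Second, the $(\chi,\psi)$ and $(\xi,\rho)$ versions in \eqref{MULTL} involve coordinates this paper never defines; your ``follow by substitution'' needs the dictionary $\psi=p^t$, $\chi=q^t$ and $\xi=\Re u=px+q$, $\rho=\Im u=py$ (with $u=pv+q$ as in \eqref{uvpq}) to become a proof. Neither is an error in what you did establish, which is the substantive content of the theorem: invariance of \eqref{MULTL} and \eqref{BIGM}.
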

   
        Formula \eqref{MULTL} ( \eqref{BIGM}) is a generalisation to
        $\mc{X}^J_n$ ( $\tilde{\mc{X}}^J_n$), $n\in \N$, of
        equation (5.25b) (respectively, (5.30)) in \cite{SB19} corresponding to $n=1$.

 \section{Berry phase on \Ka~ manifolds}\label{BEP}

 \subsection{Balanced metric}

 Let us denote \cite[page 138]{ccl}:
\[\Gamma_{ijk} :=g_{lj}\Gamma^l_{ik},\quad w_{ik}:=g_{lk}w^l_i.
\]
Also
\[\frac{\pa g_{ij}}{\pa u^k}=\Gamma_{ijk}+\Gamma_{jik}, 
\]or
\[
  g_{ij,k}:=\pa_kg_{ij}-\Gamma^l_{ki}g_{lj} -\Gamma^l_{kj}g_{il} =0,
\]
and
\begin{equation}\label{geot}
\Gamma_{ikj}\!=\!\frac{1}{2}\left( \frac{\pa g_{ik}}{\pa u^j} \!+ \!\frac{\pa g_{jk}}{\pa
    u^i}- \frac{\pa g_{ij}}{\pa u^k}\right), ~\Gamma^k_{ij}=\frac{1}{2}g^{kl}\left(\frac{\pa g_{il}}{\pa u^j}\!+\!\frac{\pa g_{jl}}{\pa
    u^i}\!-\!\frac{\pa g_{ij}}{\pa u^l}\right), ~ g_{ij}g^{jk}=\delta^k_i.
\end{equation}
$\Gamma_{ijk}$ ($\Gamma^i_{jk}$) is called Christofell's symbol of
first (respectively second) kind.

Let $v\!=\!x\!+\!\ii y$,  $x,y\in MS(n,\R)$, $y>0$,  $p,q\in M(1,n,\R)$.
Then (\cite[Theorem\,1]{SB22}, \cite[Proposition 3]{SB24}
\begin{equation}
   \dd s^2_{\mc{X}^J_n}(x,y,q,p)\! =\!\dd x g_{xx}\dd x^t\!+\!\dd yg_{yy}\dd
  y^t
  \!\!+\!\dd qg_{qq}\dd q^t\!\!+\!\dd p g_{pp}\dd p^t\!\!+\!\dd q g_{qp}\dd p^t\!\!+\!\dd p g_{pg}\dd q^t,
 \end{equation}
where
\begin{equation}\label{GG}
 g_{\mc{X}^J_n}(x,y,q,p)\!=\!\left(\begin{array}{cccc}
 g_{xx}&0&0&0\\
 0&g_{yy}&0&0\\
 0&0&g_{qq}&g_{qp}\\
 0&0&g_{pq}&g_{pp}\end{array}\right)
 \begin{array}{ll}
g_{xx}\!=\!g_{yy}\!=\!\alpha y^{-1}\otimes y^{-1}& g_{qq}\!=\!\gamma y^{-1} \\
 g_{pp}\!=\!\gamma(y\!+\!xy^{-1}x) & g_{pq}\!=\!\gamma xy^{-1} \\ g_{qp}\!=\!\gamma
   y^{-1}x & 
 \end{array} .
\end{equation}

\subsection{Partitioned inverses}
Let \begin{equation}
  h=\left(
    \begin{array}{cc}
    h_1 &h_2\\ h_3& h_4\end{array}
\right) ,
\end{equation}
where $h_1,h_2,h_3,h_4\in M(m,m), M(m,n), M(n,m), M(n,n)$. 

We have (see e.g. (1) and
(2) at p. 30 in \cite{lutke}): if  the matrices  $h_1$,
$h_4-h_3h^{-1}_1h_2$, $h_4$ and respectively  $h_1-h_2h^{-1}_4h_3$
are nonsingular, then
\begin{equation}\label{inv}
  h^{-1}=\left(\begin{array}{cc}h^1& h^2\\h^3 &
                                                h^4\end{array}\right)=\left( \begin{array}{cc}
                                                                               (h_1-h_2h^{-1}_4h_3)^{-1}&-h_1^{-1}h_2h^4\\- h^4 h_3h_1^{-1}& (h_4-h_3h^{-1}_1h_2)^{-1}\end{array}\right).\end{equation}

  If $A$ ($B$) are non singular matrices $ \in M(m,m)$, (respectively,
  $M(n,n)$), then  we
  have \cite[(11) (a) page 20]{lutke} the relation
  \[
    (A\otimes B)^{-1}=A^{-1}\otimes B^{-1}.
  \]
  \\
  
  If $A, ~B\in M(m,m)$  are  non-singular matrices, then  \cite[(5) page 29]{lutke}
  \[
   A^{-1}+B^{-1} =A^{-1}(A+B)B^{-1}.
    \]
    \begin{Proposition}\label{PR1}
      The inverse of the $MS(2n,\R)$-matrix \eqref{GG}  is
\begin{equation}\label{GG1}
 g_{\mc{X}^J_n}(x,y,q,p)^{-1}=\left(\begin{array}{cccc}
                                      \alpha^{-1}y\otimes y&0&0&0\\
0 &\alpha^{-1}y\otimes y&0 &0 \\
 0& 0 &\gamma^{-1}(y+xy^{-1}x)&-{\gamma}^{-1} xy^{-1}\\
 0&0&-{\gamma}^{-1} y^{-1}x& \gamma^{-1} y^{-1}\end{array}\right) .
\end{equation}
\end{Proposition}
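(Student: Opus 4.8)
The plan is to exploit the block-diagonal structure of \eqref{GG}. The $x$-sector ($g_{xx}$), the $y$-sector ($g_{yy}$), and the coupled $(q,p)$-sector (with blocks $g_{qq},g_{qp},g_{pq},g_{pp}$) do not interact, so the inverse is the block-diagonal assembly of the three separate inverses, and it suffices to invert each sector on its own. Throughout I would use that $x,y\in MS(n,\R)$ are symmetric with $y>0$, so that $y^{-1}$ is symmetric and all inverses written below exist. For the first two sectors, since $g_{xx}=g_{yy}=\alpha\,y^{-1}\otimes y^{-1}$, the cited Kronecker identity $(A\otimes B)^{-1}=A^{-1}\otimes B^{-1}$ gives at once $(\alpha\,y^{-1}\otimes y^{-1})^{-1}=\alpha^{-1}\,y\otimes y$, which is the first pair of diagonal entries in \eqref{GG1}.

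For the $(q,p)$-sector I would apply the partitioned-inverse formula \eqref{inv} with $h_1=g_{qq}=\gamma y^{-1}$, $h_2=g_{qp}=\gamma y^{-1}x$, $h_3=g_{pq}=\gamma xy^{-1}$, and $h_4=g_{pp}=\gamma(y+xy^{-1}x)$. The decisive simplification occurs in the lower-right Schur complement: from $h_1^{-1}=\gamma^{-1}y$ one computes $h_3h_1^{-1}h_2=\gamma\,xy^{-1}\,y\,y^{-1}x=\gamma\,xy^{-1}x$, so the $xy^{-1}x$ term of $h_4$ cancels and $h_4-h_3h_1^{-1}h_2=\gamma y$, giving $h^4=\gamma^{-1}y^{-1}$. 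Using moreover $h_1^{-1}h_2=x$ and $h_3h_1^{-1}=x$, the off-diagonal entries of \eqref{inv} collapse to $-h_1^{-1}h_2h^4=-\gamma^{-1}xy^{-1}$ and $-h^4h_3h_1^{-1}=-\gamma^{-1}y^{-1}x$, matching \eqref{GG1}.

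The step I expect to be the main obstacle is the upper-left corner $h^1=(h_1-h_2h_4^{-1}h_3)^{-1}$, because $h_4^{-1}$ contains the awkward factor $(y+xy^{-1}x)^{-1}$, and following \eqref{inv} literally would force me to invert that matrix. I would sidestep this by computing $h^4$ first and then invoking the equivalent block-inverse identity $h^1=h_1^{-1}+h_1^{-1}h_2\,h^4\,h_3h_1^{-1}$, which with $h_1^{-1}h_2=x$, $h_3h_1^{-1}=x$, $h^4=\gamma^{-1}y^{-1}$ yields $h^1=\gamma^{-1}y+\gamma^{-1}xy^{-1}x=\gamma^{-1}(y+xy^{-1}x)$, precisely the stated block. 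As an independent check, free of any partitioned formula, I would multiply \eqref{GG} by the claimed inverse \eqref{GG1} directly: the Kronecker sectors give the identity by the mixed-product rule, while in the $(q,p)$-sector the cross terms $y^{-1}xy^{-1}x$ cancel in pairs, leaving the identity and confirming the result.
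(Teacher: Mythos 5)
Your proof is correct, and for the decisive step it takes a genuinely different route from the paper. Both arguments share the same skeleton: exploit the block-diagonal structure, invert the $x$- and $y$-sectors with $(A\otimes B)^{-1}=A^{-1}\otimes B^{-1}$, and attack the coupled $(q,p)$-sector with the partitioned-inverse formula \eqref{inv}. The paper, however, goes after the hard corner $g^{qq}=(h_1-h_2h_4^{-1}h_3)^{-1}$ head-on: it rewrites the expression in terms of $x^{-1}$ and expands $\bigl(1+(yx^{-1})^2\bigr)^{-1}$ as a Neumann series $1-M+M^2-\cdots$, eventually collapsing everything to $\gamma^{-1}(y+xy^{-1}x)$. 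That computation is only formal -- it presupposes $x$ invertible (false in general, e.g.\ $x=0$) and convergence of the series -- so strictly speaking the paper's derivation needs a supplementary density or continuity argument. You sidestep both difficulties: you compute the \emph{easy} Schur complement $h_4-h_3h_1^{-1}h_2=\gamma(y+xy^{-1}x)-\gamma xy^{-1}x=\gamma y$, read off $h^4=\gamma^{-1}y^{-1}$ and the off-diagonal blocks, and recover the hard corner from the Banachiewicz-type identity $h^1=h_1^{-1}+h_1^{-1}h_2\,h^4\,h_3h_1^{-1}$, which involves no inverse of $x$ and no series. That identity is not among the formulas the paper cites from the matrix handbook, so you should either derive it from \eqref{inv} or simply lean on your closing verification: multiplying \eqref{GG} by \eqref{GG1} and watching the $y^{-1}xy^{-1}x$ terms cancel is itself a complete, assumption-free proof, and it is the cleanest way to certify the result for all symmetric $x$ and all $y>0$. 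In short, your argument proves slightly more than the paper's (no invertibility of $x$ needed) with strictly more elementary tools; the paper's computation, while instructive, is the one that would need patching.
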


\begin{proof}
  We apply \eqref{inv} to the matrix
\begin{equation}
\left(\begin{array}{cc} g_{qq}& g_{qp}\\ g_{pq}&g_{pp}\end{array}\right)=
                         \left(\begin{array}{cc}\gamma y^{-1} &\gamma y^{-1}x\\\gamma
          xy^{-1}&
        \gamma(y+xy^{-1}x)\end{array}\right).
  \end{equation}
  If we denote
  \begin{equation}
      \left(\begin{array}{cc} g^{qq} &g^{qp}\\
            g^{pq}&g^{pp}\end{array}\right):=\left(\begin{array}{cc}g_{qq}&g_{qp}\\g_{pq}&g_{pp}\end{array}\right)^{-1},
      \end{equation}
    then with formula \eqref{inv} we  get successively
  \begin{subequations}
    \begin{align}
      g^{qq}&=\left\{\gamma y^{-1}-\gamma y^{-1}x
              [\gamma (y+xy^{-1}x)]^{-1}\gamma
              xy^{-1}\right\}^{-1}\nonumber\\
            & =\gamma^{-1}\left\{y^{-1}- (x^{-1}y)^{-1} \left(y+xy^{-1}x\right)^{-1}(yx^{-1})^{-1} \right\}^{-1}\nonumber\\
  & =\gamma^{-1}\left\{y^{-1}-\left[yx^{-1}(y+xy^{-1}x)x^{-1}y\right]^{-1}\right\}^{-1}\nonumber\\
      &  =\gamma^{-1}\left\{y^{-1}-\left[yx^{-1}yx^{-1}y+yx^{-1}xy^{-1}xx^{-1}y\right]^{-1}\right\}^{-1}\nonumber\\ 
            &=\gamma^{-1}\left\{y^{-1}-\left[((yx^{-1})^2+1)y\right]^{-1}\right\}^{-1}\nonumber\\
      & =\gamma^{-1}\left\{y^{-1}-y^{-1}\left[1+(yx^{-1})^2\right]^{-1}\right\}^{-1}\nonumber\\
      &=\gamma^{-1}\left\{y^{-1}(1-[1+(yx^{-1})^2]^{-1} \right\}^{-1}.\nonumber
      \end{align}
    \end{subequations}
    Let us denote
    \[
      M:=(yx^{-1})^{2}.
    \]
    We apply formula \cite[(a) page 29]{lutke}
    \[
      (1+M)^{-1}=1-M+M^2-M^3+...~,
    \]
    and we get
    \begin{subequations}
      \begin{align}\label{39a}
      g^{qq}& =\gamma^{-1}\left\{y^{-1}M(1+M)^{-1}\right\} ^{-1} =\gamma^{-1}(y+xy^{-1}x).
       \end{align}
    \end{subequations} 
  \end{proof}
According with  \cite[Theorem 1]{SB22}, \cite[Proposition 3]{SB24}),  we consider now on the manifold $\tilde{\mc{X}}^J_n$ the metric matrix
\begin{equation}\label{GGP}
 g'_{\tilde{\mc{X}}^J_n}(x,y,q,p,\kappa)=\left(\begin{array}{ccc}
 g_{xx}&0&0\\
 0&g_{yy}&0\\
 0&0&g'(q,p,\kappa)
 \end{array}\right),\end{equation}
where
\begin{equation}\label{PQK}
  g'(q,p,\kappa)\!\!=\!\!
  \left(\!\begin{array}{ccc}
 g'_{qq}&g'_{qp}&g'_{q\kappa}\\
g'_{pq}& g'_{pp}&g'_{p\kappa}\\
g'_{\kappa q} &g'_{\kappa p}&g'_{\kappa\kappa}\end{array}\!\right)
 \begin{array}{ccc}
 g'_{qq}\!\!=\!\!g_{qq}\!+\!\delta p^t\!\otimes p&g'_{pp}\!=\!g_{pp}\!+\!\delta q^t\!\otimes q & g'_{\kappa\kappa}\!=\!\delta  \\
 g'_{pq}\!=\!g_{pq}\!-\!\delta p^t \otimes q& g'_{q\kappa}\!=\!-\!\delta p^t & g'_{p\kappa}\!\!=\!\!\delta q^t.
 \end{array}.\end{equation}
Note that if $p,q\in M(1,n,\db{F})$, then \cite[(12) page 20]{lutke}
\[
  p^t\otimes q=q^t\otimes p =p^tq =q^t p.
\]
We write \eqref{PQK} as
\begin{equation}
  g'(q,p,\kappa)=\left(\begin{array}{cc}A_{11}&A_{12}\\A_{21}&A_{22}\end{array}\right),
\end{equation}
where
\begin{subequations}\label{312}
  \begin{align}
  A_{11} & =\left(\begin{array}{cc}g'_{qq} & g'_{qp}\label{312a}\\
  g'_{pq}& g'_{pp}\end{array}\right)=
\left(\begin{array}{cc}\gamma y^{-1}+\delta p\otimes p^t& \gamma
                                                   y^{-1}x-\delta
                                                   p\otimes
                                                   q^t \\ \gamma
        xy^{-1}-\delta q\otimes p^t & \gamma(y+xy^{-1}x)+\delta q\otimes
                                      q^t\end{array}\right),
                                \\
                                  ~A_{12}& =\left(\begin{array}{l}g'_{q\kappa}\\g'_{p\kappa}\end{array}\right)
                                  =\left(\begin{array}{c}-\delta p^t\\
                                           \delta q^t\end{array}\right),~
 A_{21}=\left(\begin{array}{cc}g'_{\kappa q}~~g'_{\kappa p}
       \end{array}\right)=\left(\begin{array}{cc}-\delta p ~~\delta q  \end{array}\right),~A_{22}=g'_{\kappa \kappa}.
\end{align}
   \end{subequations}

 Now we calculate the inverse matrix of \eqref{GGP}  
\begin{equation}\label{GGPI}
 g'_{\tilde{\mc{X}}^J_n}(x,y,q,p,\kappa)^{-1}=\left(\begin{array}{ccc}
                                      \alpha^{-1}y\otimes y&0&0\\
0 &\alpha^{-1}y\otimes y&0  \\
 0& 0 & g'^{-1}(q,p,\kappa)
 \end{array}\right) ,
\end{equation}
where
 \begin{equation}\label{316}
       g'^{-1}(q,p,\kappa)=
       \left(\begin{array}{cc}b_{11}&b_{12}\\b_{21}& b_{22}\end{array}\right).
       \end{equation}

   With  the notation 
   \begin{equation}\label{A11}
     A^{-1}_{11}=\left(\begin{array}{cc}a_{11}& a_{12}\\a_{21}& a_{22}\end{array}\right),
   \end{equation}
 the values of the matrix \eqref{A11}  became with \eqref{inv}
   \begin{subequations}\label{XYY}
     \begin{align}
       a_{11} & \!\!=\!\![\gamma y^{-1}\!\!+\!\!\delta p^t\otimes
                p\!\!-\!\!( \gamma y^{-1}x\!\!-\!\!\delta 
     q^t\otimes p)\\& \times (\gamma (y\!\!+\!\!xy^{-1}x)\!\!+\!\!\delta q^t\otimes q)^{-1}(\gamma xy^{-1}\!\!-\!\!\delta
               p^t\otimes q)]^{-1},\nonumber\\
       a_{12} &\!\!=\!\!-(\gamma y^{-1}+\delta p^t\otimes p)^{-1}(
                \gamma y^{-1}x-\delta
                q^t\otimes p)a_{22},\\
       a_{21} & \!\!=\!\!-a_{22}( \gamma xy^{-1}-\delta
       p^t\otimes q)(\gamma y^{-1}+\delta p^t\otimes p)^{-1}\\
       a_{22} &\!\!=\!\![\gamma(y\!\!+\!\!xy^{-1}x)\!\!+\!\!\delta
                q^t\otimes q\!\!-\!\!(\gamma  xy^{-1}\!\!-\!\!\delta
                p^t\otimes q)\nonumber\\ &  \times (\gamma y^{-1}\!\!+\!\!\delta p^t\otimes
                p)^{-1}( \gamma y^{-1}x\!\!-\!\!\delta q^t\otimes p)]^{-1}\label{317e}.
       \end{align}
     \end{subequations}
     \begin{Proposition}\label{PR2}
       For $a_{22}$ appearing in \eqref{A11}, \eqref{317e} we find the
       simplified expression
\begin{subequations}\label{FINAL}
     \begin{align}
       a_{22} &= [\gamma y +\delta q^t\otimes q+\delta x p^t\otimes
                py(1+\frac{\delta}{\gamma}p^t\otimes py)^{-1}y^{-1}x \nonumber\\
       & +\gamma\delta x(\gamma +\delta p^t\otimes py)^{-1}q^t\otimes
      p\nonumber\\
    &~~~+\gamma\delta p^t\otimes q (\gamma +\delta y p^t\otimes
      p)^{-1} x\nonumber\\
    &~~~-\delta^2p^t\otimes q y(\gamma +\delta p^t\otimes p
      y)^{-1}q^t\otimes p]^{-1}\nonumber .
     \end{align}
     \end{subequations}
For the matrix elements in \eqref{316} we find 
  \begin{subequations}\label{3166}
         \begin{align}
           b_{11}&=(A_{11}-A_{12}\delta^{-1}A_{21})^{-1},\label{316a}\\
           b_{12}&=-A_{11}^{-1}A_{12}b_{22},\label{316b}\\
           b_{21}&=- b_{22}A_{21}A_{11}^{-1},\label{316c}\\
           b_{22}&=(\delta -A_{21}A_{11}^{-1}A_{12})^{-1},\label{316d}
           \end{align}
         \end{subequations}
     and
         \[
           b_{11}=\left(\begin{array}{cc}g^{qq}&g^{qp}\\g^{pq}&g^{pp}\end{array}\right),
         \]
         \[
           b_{22}=\delta^{-1}[1-\delta(pa_{11}p^t-qa_{21}p^t-pa_{12}q^t+qa_{22}q^t)]^{-1}.
         \]
       \end{Proposition}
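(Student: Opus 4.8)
The plan is to treat $g'(q,p,\kappa)$ as the $2\times 2$ block matrix $\left(\begin{smallmatrix}A_{11}&A_{12}\\A_{21}&A_{22}\end{smallmatrix}\right)$ of \eqref{312}, in which the last block $A_{22}=g'_{\kappa\kappa}=\delta$ is a scalar, and to read off its inverse from the partitioned-inverse formula \eqref{inv}. Taking $h_1=A_{11}$, $h_2=A_{12}$, $h_3=A_{21}$, $h_4=\delta$ in \eqref{inv} gives the four relations \eqref{3166} at once, with $b_{22}$ playing the role of $h^4$ in the off-diagonal blocks; so the substance of the proposition is to evaluate the two Schur complements $A_{11}-A_{12}\delta^{-1}A_{21}$ and $\delta-A_{21}A_{11}^{-1}A_{12}$ and to simplify $a_{22}$.

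For $b_{11}$ I would compute $A_{12}\delta^{-1}A_{21}$ directly from \eqref{312}: it is the array of rank-one outer products $\delta\left(\begin{smallmatrix}p^t\otimes p&-p^t\otimes q\\-q^t\otimes p&q^t\otimes q\end{smallmatrix}\right)$, and, using the convention $p^t\otimes q=q^t\otimes p=p^tq=q^tp$, this coincides termwise with the $\delta$-dependent part of $A_{11}$. Hence $A_{11}-A_{12}\delta^{-1}A_{21}=\left(\begin{smallmatrix}g_{qq}&g_{qp}\\g_{pq}&g_{pp}\end{smallmatrix}\right)$, the unextended $(q,p)$-block of \eqref{GG}, whose inverse was already found in Proposition \ref{PR1}; this yields $b_{11}=\left(\begin{smallmatrix}g^{qq}&g^{qp}\\g^{pq}&g^{pp}\end{smallmatrix}\right)$. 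For $b_{22}$ I would substitute $A_{11}^{-1}=(a_{ij})$ from \eqref{A11} into $A_{21}A_{11}^{-1}A_{12}$ with $A_{21}=(-\delta p,\, \delta q)$ and $A_{12}=\left(\begin{smallmatrix}-\delta p^t\\ \delta q^t\end{smallmatrix}\right)$; carrying out the block multiplication produces $\delta^2(pa_{11}p^t-qa_{21}p^t-pa_{12}q^t+qa_{22}q^t)$, and factoring one $\delta$ out of $\delta-A_{21}A_{11}^{-1}A_{12}$ gives the stated expression for $b_{22}$.

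The main labour, and the main obstacle, is the simplified form of $a_{22}$. Starting from the Schur complement \eqref{317e}, I would first invert $B:=\gamma y^{-1}+\delta\, p^t\otimes p$. Writing $B=\gamma y^{-1}(1+\tfrac{\delta}{\gamma}y\,p^t\otimes p)$ and applying the series $(1+M)^{-1}=1-M+M^2-\cdots$ to the rank-one $M=\tfrac{\delta}{\gamma}y\,p^t\otimes p$, the series telescopes, since $M^2=(\tfrac{\delta}{\gamma}pyp^t)M$, to $1-M(1+\tfrac{\delta}{\gamma}pyp^t)^{-1}$, giving $B^{-1}$ in closed form. I would then expand the product $(\gamma xy^{-1}-\delta\, p^t\otimes q)B^{-1}(\gamma y^{-1}x-\delta\, q^t\otimes p)$ into four terms. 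The diagonal piece $\gamma xy^{-1}x-\gamma^2 xy^{-1}B^{-1}y^{-1}x$ collapses, through $y-\gamma B^{-1}=\tfrac{\delta}{\gamma}y\,p^t\otimes p\,y\,(1+\tfrac{\delta}{\gamma}pyp^t)^{-1}$ and the rank-one identity $N(1+tN)^{-1}=N\,(1+t\,\tr N)^{-1}$, to the third summand of \eqref{FINAL}; the three cross terms reduce to the last three summands after the identities $y^{-1}B^{-1}=(\gamma+\delta\, p^t\otimes p\,y)^{-1}$, $B^{-1}y^{-1}=(\gamma+\delta\, y\,p^t\otimes p)^{-1}$ and $B^{-1}=y(\gamma+\delta\, p^t\otimes p\,y)^{-1}$, each obtained by pulling $y^{\pm1}$ through $B$. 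Together with the untouched $\gamma y$ and $\delta\, q^t\otimes q$ this accounts for all six summands of \eqref{FINAL}.

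I expect the delicate point to be exactly this last bookkeeping: converting the scalar denominators $1+\tfrac{\delta}{\gamma}pyp^t$ into the matrix-inverse form $(1+\tfrac{\delta}{\gamma}p^t\otimes p\,y)^{-1}$ that appears in \eqref{FINAL}, and keeping track of signs and transposes among the cross terms, where $p^t\otimes q$ and $q^t\otimes p$ enter asymmetrically even though the quoted convention identifies them. A useful consistency check throughout is that setting $\delta=0$ must return $a_{22}=\gamma^{-1}(y+xy^{-1}x)$ and reduce $b_{11}$ to the $(q,p)$-block of \eqref{GG1}.
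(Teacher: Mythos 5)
Your proposal is correct, and its skeleton coincides with the paper's: apply the partitioned-inverse formula \eqref{inv} to the block decomposition \eqref{312} with $h_4=A_{22}=\delta$, observe that the $\delta$-terms cancel so that $A_{11}-A_{12}\delta^{-1}A_{21}$ is exactly the $(q,p)$-block of \eqref{GG} and Proposition \ref{PR1} then gives $b_{11}$, and reduce everything else to simplifying $a_{22}$. The difference lies in how the four-term product in the Schur complement \eqref{317e} is simplified. The paper factors $x$ out of the inverses --- e.g. it writes $xy^{-1}(\gamma y^{-1}+\delta p^t\otimes p)^{-1}y^{-1}x=[x^{-1}y(\gamma y^{-1}+\delta p^t\otimes p)yx^{-1}]^{-1}$ --- and treats the diagonal piece with the formal series $(1+A)^{-1}=1-A+A^2-\cdots$, $A=\frac{\delta}{\gamma}p^t\otimes py$, obtaining $\gamma xA(1+A)^{-1}y^{-1}x$; you instead invert $B=\gamma y^{-1}+\delta p^t\otimes p$ in closed Sherman--Morrison form via the rank-one telescoping $M^2=(\frac{\delta}{\gamma}pyp^t)M$, and move $y^{\pm 1}$ through $B$ with the identities $y^{-1}B^{-1}=(By)^{-1}=(\gamma+\delta p^t\otimes py)^{-1}$, $B^{-1}y^{-1}=(yB)^{-1}$, $B^{-1}=y(By)^{-1}$. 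Both routes land on the same six summands of \eqref{FINAL}, but yours buys genuine rigor at no extra cost: it never assumes $x$ invertible (the paper's intermediate $x^{-1}$'s, in this proof and in that of Proposition \ref{PR1}, are legitimate only on the dense set where $\det x\neq 0$, since $x\in MS(n,\R)$ may well be singular), and it replaces a geometric series whose convergence needs a condition like $\frac{\delta}{\gamma}pyp^t<1$ by finite rank-one identities valid identically. You also spell out the verification of the $b$-block statements \eqref{3166} --- the cancellation of the $\delta$-terms in $A_{11}-A_{12}\delta^{-1}A_{21}$ and the block multiplication producing $\delta^2(pa_{11}p^t-qa_{21}p^t-pa_{12}q^t+qa_{22}q^t)$ --- which the paper asserts without proof, its printed argument covering only $a_{22}$.
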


     \begin{proof}
     We write \eqref{317e} as
     \begin{equation}
     a_{22} = \![\gamma(y\!\!+\!\!xy^{-1}x)\!\!+\!\!\delta
     q^t\otimes q+ XX]^{-1},
   \end{equation}
   where
   \begin{equation}XX := -(\gamma  xy^{-1}\!\!-\!\!\delta
                p^t\otimes q)  (\gamma y^{-1}\!\!+\!\!\delta p^t\otimes
         p)^{-1}( \gamma y^{-1}x\!\!-\!\!\delta q^t\otimes p).
       \end{equation}
We have successively
\begin{subequations}\label{XXX}
  \begin{align}
    XX& = -\gamma xy^{-1}(\gamma y^{-1}\!\!+\!\!\delta p^t\otimes
         p)^{-1}\gamma y^{-1}x\nonumber\\
       & ~~~+\gamma xy^{-1}(\gamma y^{-1}\!\!+\!\!\delta p^t\otimes
         p)^{-1}\delta q^t\otimes p\nonumber\\
    &~~~+\delta p^t\otimes q (\gamma y^{-1}\!\!+\!\!\delta p^t\otimes
      p)^{-1}\gamma y^{-1}x\nonumber\\
    & ~~~-\delta p^t\otimes q (\gamma y^{-1}\!\!+\!\!\delta p^t\otimes
      p)^{-1}\delta q^t\otimes p \nonumber\\
    & =-\gamma^2(yx^{-1})^{-1}(\gamma y^{-1}\!\!+\!\!\delta p^t\otimes
      p)^{-1}(x^{-1}y)^{-1}\nonumber\\
    & ~~~+\gamma \delta (yx^{-1})^{-1}(\gamma y^{-1}\!\!+\!\!\delta p^t\otimes
      p)^{-1} q^t\otimes p\nonumber\\
    &~~~ +\gamma \delta p^t\otimes q (\gamma y^{-1}\!\!+\!\!\delta p^t\otimes
    p)^{-1}(x^{-1}y)^{-1}\nonumber\\
    & ~~~-\delta^2 p^t\otimes q (\gamma y^{-1}\!\!+\!\!\delta p^t\otimes
      p)^{-1}q^t\otimes p\nonumber\\
   & = -\gamma^2[x^{-1}y(\gamma y^{-1}+\delta p^t\otimes
     p)yx^{-1}]^{-1}\nonumber\\
    &~~~+\gamma \delta [(\gamma y^{-1}\!\!+\!\!\delta p^t\otimes
      p) yx^{-1}]^{-1}q^t\otimes p\nonumber\\
    &~~~+\gamma \delta  p^t\otimes q [(x^{-1}y)(\gamma y^{-1}+\delta p^t\otimes
      p)]^{-1}\nonumber\\
      &~~~-\delta^2p^t\otimes q (\gamma y^{-1}+\delta p^t\otimes
        p)^{-1}q^t\otimes p\nonumber\\
    &= -\gamma^2[\gamma x^{-1}yx^{-1}+\delta x^{-1}y p^t\otimes
      pyx^{-1}]^{-1}\nonumber\\
   & +~~~\gamma \delta (\gamma x^{-1}+\delta p^t\otimes p
    yx^{-1})^{-1}q^t\otimes p\nonumber\\
    &+~~~\gamma \delta p^t\otimes q (\gamma x^{-1}+\delta x^{-1}y p^t\otimes p)^{-1}\nonumber\\
     & ~~~-\delta^2 p^t\otimes q (\gamma y^{-1}\!\!+\!\!\delta p^t\otimes
       p)^{-1}q^t\otimes p\nonumber\\
      &=-\gamma^2[x^{-1}y(\gamma +\delta p^t\otimes py)x^{-1}]^{-1}\nonumber\\
    &~~~+\gamma\delta \left[(\gamma +\delta p^t\otimes
      py)x^{-1}\right]^{-1}q^t\otimes p\nonumber\\
    &~~~+\gamma\delta p^t\otimes q[x^{-1}(\gamma +\delta y p^t\otimes
      p)]^{-1}\nonumber\\
    & -\delta^2 p^t\otimes q (\gamma y^{-1}\!\!+\!\!\delta p^t\otimes
      p y y^{-1})^{-1}q^t\otimes p\nonumber\\
     &=-\gamma^2x(\gamma +\delta p^t\otimes
        py)^{-1}y^{-1}x+XY\nonumber,
      \end{align}
    \end{subequations}
    where
    \begin{subequations}
      \begin{align}
    XY&:=\gamma\delta x(\gamma +\delta p^t\otimes py)^{-1}q^t\otimes
      p\nonumber\\
    &~~~+\gamma\delta p^t\otimes q (\gamma +\delta y p^t\otimes
      p)^{-1}x \nonumber\\
    &~~~-\delta^2p^t\otimes q y(\gamma +\delta p^t\otimes p
      y)^{-1}q^t\otimes p\nonumber .
    \end{align}
  \end{subequations}
  We have
  \begin{equation}\label{A22F}
    a_{22}= [XZ+XY]^{-1},
    \end{equation}
  where
 \begin{subequations}
    \begin{align}
    XZ&:= \gamma (y+xy^{-1}x)+\delta q^t\otimes q-
\gamma^2x(\gamma +\delta p^ty)^{-1}y^{-1}x\nonumber\\
 &= \gamma (y+xy^{-1}x)+\delta q^t\otimes q-\gamma
 x(1+A)^{-1}y^{-1}x,  A:=\frac{\delta}{\gamma}p^t\otimes py
   \nonumber\\
      & = \gamma y+\gamma xy^{-1}x+\delta q^t\otimes q -\gamma
        x(1-A+A^2-...)y^{-1}x\nonumber\\
      & =\gamma y+\delta q^t\otimes q+\gamma xA(1+A)^{-1}y^{-1}x, 
 \end{align}
 \end{subequations}
 and  we find
 \begin{equation}\label{323X}
   XZ=\gamma y +\delta q^t\otimes q+\delta x p^t\otimes
   py(1+\frac{\delta}{\gamma}p^t\otimes py)^{-1}y^{-1}x.
   \end{equation}
 With \eqref{317e},  \eqref{A22F}, \eqref{323X},  we  finally find for
 $a_{22}$ the expression \eqref{FINAL}.
\end{proof}


For ${\mc{X}}^J_1$  we have
\begin{equation}\label{322}
  \left(\begin{array}{cc}g_{qq}&g_{qp}\\g_{pq}&g_{pp}\end{array}\right)^{-1}=[\gamma\left(\begin{array}{cc}\frac{1}{y}&\frac{x}{y}\\\frac{x}{y}& \frac{x^2+y^2}{y}\end{array}\right)]^{-1}=
  \frac{1}{\gamma y}\left(\begin{array}{cc}x^2+y^2&-x\\-x&
                                                           1\end{array}\right),
                                                       \end{equation}
                                                       which is
                                                       compatible with
                                                       \eqref{GG1}.
                                                       
For $\tilde{\mc{X}}^J_1$ we have 
\begin{equation}\label{323}
  \left(\begin{array}{cc}g'_{qq}&g'_{qp}\\g'_{pq}&g'_{pp}\end{array}\right)^{-1}\!=\!\frac{1}{\gamma
 }\frac{1}{\gamma
  y\!+\!\delta[(px+q)^2\!+\!y^2
  p^2]}\left(\begin{array}{cc}\gamma(x^2\!+\!y^2)\!+\!\delta q^2y& -\gamma
                                                           x\!+\!\delta p
                                                           q y\\\!-\!\gamma
                                                           x\!+\!\delta p
                                                           q y &
                                                                 \gamma
                                                                 \!+\!\delta
                                                                 p^2 y\end{array}
                                                            \right) . \end{equation}
                                                           If in
                                                           \eqref{323}
                                                           we take
                                                           $\delta\rightarrow 0$
                                                           we get \eqref{322}.

Let $(M,g)$ be a $n$-dimensional  Riemannian manifold. In  a local coordinate system $x^1,\dots$, $x^n$ the geodesic equations on a manifold $M$ 
with components of the linear connection $\Gamma$ are (see e.g.  \cite[Proposition 7.8 p 145]{kn1}):
\begin{equation} \label{GEOO}
\frac{\dd ^2 x^i}{\dd t^2}  +\sum_{j,k=1}^n\Gamma^i_{jk}\frac{\dd x^j}{\dd t}
\frac{\dd x^k}{\dd t}  =  0, ~i=1,\dots,n .
\end{equation}
The components $\Gamma^i_{jk}$ (Christofell's symbols of second kind) of a Riemannian (Levi-Civita)
connection $\nabla$  
are obtained from the  Christofell's symbols of first kind $[ij,k]$ by solving  the linear system of algebraic equations, see e.g. \cite[p 160]{kn1}
\begin{equation}\label{geot1}g_{lk}\Gamma^l_{ji}=[ij,k]=:\frac{1}{2}\left(\frac{\pa
      g_{ki}}{\pa x_j}+\frac{\pa g_{jk}}{\pa x_i}-\frac{\pa g
      _{ji}}{\pa x_k}\right),~~i,j,k,l=1,\dots,n. \end{equation}
{\it The} $\frac{n^2(n+1)}{2}$ {\it distinct} $\Gamma$-{\it symbols of an}
 $n$-{\it dimensional  manifold are given by the formulas}
\begin{equation}\label{geoI}
  \Gamma^m_{ij}=[ij,k]g^{km}, \quad
\text{where~ ~ ~ }g^{mk}g_{kl}=\delta ^m_l, ~~i,j,k,m=1,\dots,n.
\end{equation}

Applying \eqref{geot1}, \eqref{geoI} to \eqref{322} (\eqref{323}) for
$\mc{X}^J_1$   (respectively $\tilde{\mc{X}}^J_1$), we  get  again
$\Gamma$-s obtained in \cite[(22)]{SB21}  (respectively,  \cite[page
29]{SB21}).

We apply \eqref{geoI} to calculate $\Gamma^{x}_{xy}$
\[
  \Gamma^x_{xy}=[xy,k]g^{kx}=\frac{1}{2}(\frac{\pa g_{xx}}{\pa
    y}+\frac{\pa g_{yx}}{\pa x}-\frac{\pa g_{xy}}{\pa x^k})g^{kx}.
\]
For example,  for \eqref{GGPI}, \eqref{geot} gives
\[
  \Gamma^x_{xy}=\frac{1}{2}(\frac{\pa g_{xx}}{\pa
    y}+\frac{\pa g_{yx}}{\pa x}-\frac{\pa g_{xy}}{\pa x})g^{xx}=-\frac{1}{y},
  \]
  i.e. the first term in \cite[(73)]{SB21}.

  Similarly, with  \eqref{312},  we have on $\tilde{\mc{X}}^J_1$
 \begin{equation}
   g'(q,p,\kappa)=\left(\begin{array}{ccc}\gamma y^{-1}+\delta p^2&
                                                                    \gamma
                                                                    y^{-1}x-\delta
                                                                    pq
                          & -\delta p\\\gamma xy^{-1} -\delta p q&
                                                                   \gamma
                                                                   (y+xy^{-1}x)+
                                                                   \delta
                                                                   q^2&
                                                                        \delta
                                                                        q\\
                          -\delta p&\delta q & \delta\end{array}\right) .
                    \end{equation}
                    We find
                    \begin{equation}
                      \text{det} g'(x,y,q,p,\kappa)=\gamma^2 \delta.
                      \end{equation}
                      We now calculate $\Gamma^p_{pp}$
                      \begin{equation}\label{SUMM}
\Gamma^p_{pp}=[pp,x]g^{xp}+[pp,y]g^{yp}+[pp,q]g^{qp}+[pp,p]g^{pp}+[pp,\kappa]g^{\kappa p}.
                      \end{equation}
  With formula \eqref{geot1}, we find that the only non-zero
  Chistofell's symbol in \eqref{SUMM} is
  \[
[pp,q]=-2 \delta q
\]
and
\[
  \Gamma^{p}_{pp}=-2\delta q g^{qp},
\]
where
\[
  g^{qp}=-\frac{1}{\gamma^2\delta}\text{det}\left(\begin{array}{cc}\gamma
                                                    xy^{-1}-\delta pq&
                                                                       \delta
                                                                       q\\
                                                    -\delta p&
                                                               \delta\end{array}\right)=-\frac{1}{\gamma} xy^{-1},
  \]
   \[
\Gamma^p_{pp}=2 \frac{\delta}{\gamma}xy^{-1}q,
  \]
and we  regain  the value calculated in \cite[page 22]{SB21}.
  
  The following  Proposition is \cite[Proposition 5.6]{SB19} for $\tilde{\mc{X}}^J_1$
         \begin{Proposition}\label{Pr5}The metric on the extended Siegel--Jacobi upper half-plane $\tilde{\mc{X}}^J_1$, in the partial S-coordinates $(x,y,p,q,\kappa)$:
\begin{gather}{\rm d} s^2_{\tilde{\mc{X}}^J_1} ={\rm d}
 s^2_{\mc{X}^J_1}(x,y,p,q)+\lambda^2_6(p,q,\kappa)\nonumber\\
\hphantom{{\rm d} s^2_{\tilde{\mc{X}}^J_1}}{} =\frac{\alpha}{y^2}\big({\rm d} x^2+{\rm d}
 y^2\big)+\left[\frac{\gamma}{y}\big(x^2+y^2\big)+\delta q^2\right]{\rm d} p^2+
 \left(\frac{\gamma}{y}+\delta p^2\right){\rm d} q^2 +\delta {\rm d} \kappa^2\nonumber\\
\hphantom{{\rm d} s^2_{\tilde{\mc{X}}^J_1}=}{} + 2\left(\gamma\frac{x}{y}-\delta pq\right){\rm d} p{\rm d} q +2\delta (q{\rm d} p{\rm d}
 \kappa-p{\rm d} q {\rm d} \kappa)\label{linvG}
 \end{gather}
 is left-invariant with respect to the action given in \cite[Lemma 5.1]{SB19} of the Jacobi group $G^J_1(\R)$.

The matrix attached to metric \eqref{linvG} is
\begin{gather}\label{335}
g_{\tilde{\mc{X}}^J_1} = \left(\begin{matrix}g_{xx} &0 &0 &0&0\\
0& g_{yy}& 0& 0 & 0\\
0& 0& g'_{qq} & g'_{qp} & g'_{q\kappa}\\0 & 0& g'_{pq}& g'_{pp}
 &g'_{p\kappa}\\
0& 0& g'_{\kappa q}& g'_{\kappa p} & g'_{\kappa\kappa}
\end{matrix}\right),\nonumber\\
g'_{pq} = g_{pq}-\delta p q , \qquad g'_{p\kappa}=\delta q, \qquad g'_{q\kappa}=-\delta p, \qquad
g'_{pp} = g_{pp}+\delta q^2,\nonumber\\
g'_{qq} = g_{qq}+\delta p^2, \qquad g'_{\kappa\kappa} = \delta.\label{begGG}
\end{gather}
We have 
\begin{equation}
g_{\mc{X}^J_1} = \left(\begin{matrix}g_{xx} &0 &0 &0\\
0& g_{yy}& 0& 0\\
0& 0& g_{qq} & g_{qp} \\0 & 0& g_{pq}& g_{pp}
\end{matrix}\right),
~\begin{array}{l}g_{xx}=g_{yy}=\frac{\alpha}{y^2}\\
   g_{qq}=\frac{\gamma}{y},~
g_{pp}=\gamma\frac{x^2+y^2}{y},~
   g_{pq}=\gamma \frac{x}{y}
   \end{array}
\end{equation} associated with the balanced metric $${\rm d} s^2_{\mc{X}^J_1}(x,y,p,q) =
c_1\frac{{\rm d} x^2 + {\rm d} y^2}{4y^2}
+\frac{c_2}{y}\big[\big(x^2+y^2\big){\rm d} p^2+{\rm d} q^2+2x{\rm d}
p{\rm d} q\big] $$on~$\mc{X}^J_1$, where $\frac{c_1}{4}=\alpha, c_2=\gamma$.

The extended Siegel--Jacobi upper half-plane $\tilde{\mc{X}}^J_1$ does not admit an almost contact structure $(\Phi,\xi,\eta)$ with a contact form $\eta=\lambda_6$ and Reeb vector $\xi= \operatorname{Ker}(\eta)$.
\end{Proposition}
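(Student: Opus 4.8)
The plan is to observe that the assertion reduces to a property of the one-form $\lambda_6$ alone and, in fact, fails already at the level of the contact condition, independently of any choice of the tensor field $\Phi$ or the vector field $\xi$. Recall that an almost contact structure $(\Phi,\xi,\eta)$ on a $(2m+1)$-dimensional manifold requires $\eta(\xi)=1$ and $\Phi^2=-\operatorname{Id}+\eta\otimes\xi$, and that for $\eta$ to be a \emph{contact} form -- so that the Reeb field $\xi$, characterised by $\eta(\xi)=1$ and $\xi\lrcorner\dd\eta=0$, be well defined and unique, with $\operatorname{Ker}\eta$ the complementary horizontal distribution -- one needs $\eta\wedge(\dd\eta)^m\neq 0$ everywhere. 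Here $\tilde{\mc{X}}^J_1$ has dimension $5=2\cdot 2+1$, so $m=2$ and the contact condition reads $\eta\wedge\dd\eta\wedge\dd\eta\neq 0$. First I would take $\eta=\lambda_6=\lambda^R$, the $G^J_1(\R)$-invariant one-form of Lemma \ref{actM}\,d), which for $n=1$ is $\eta=\dd\kappa-p\,\dd q+q\,\dd p$.

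The key step is the computation of $\dd\eta$. Since $\dd(\dd\kappa)=0$, $\dd(-p\,\dd q)=-\dd p\wedge\dd q$ and $\dd(q\,\dd p)=\dd q\wedge\dd p=-\dd p\wedge\dd q$, I get $\dd\eta=-2\,\dd p\wedge\dd q$. Consequently $(\dd\eta)^2=4\,(\dd p\wedge\dd q)\wedge(\dd p\wedge\dd q)=0$, because $\dd p\wedge\dd p=0$, and therefore $\eta\wedge(\dd\eta)^2=0$ identically on $\tilde{\mc{X}}^J_1$. This shows that $\lambda_6$ is not a contact form on the five-dimensional manifold, which is already incompatible with the hypothesis imposed on $\eta$; hence no almost contact structure with contact form $\eta=\lambda_6$ can exist.

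I would make the obstruction transparent by exhibiting the non-uniqueness of the putative Reeb field. From $\dd\eta=-2\,\dd p\wedge\dd q$ one reads off $\operatorname{Ker}\dd\eta=\operatorname{span}(\pa_x,\pa_y,\pa_\kappa)$, and on this kernel $\eta$ restricts to $\dd\kappa$; thus $\xi\lrcorner\dd\eta=0$ together with $\eta(\xi)=1$ force only $\xi^\kappa=1$ while leaving $\xi^x,\xi^y$ completely free, so no Reeb vector is singled out. The conceptual point I would stress is that the obstruction is intrinsic to the extra directions $x,y$: restricted to the three-dimensional Heisenberg slice $(p,q,\kappa)$ the same $\lambda_6$ \emph{is} a contact form, since there $\eta\wedge\dd\eta=-2\,\dd\kappa\wedge\dd p\wedge\dd q\neq 0$; it is precisely because $\dd\lambda_6$ has rank $2$ rather than the rank $4$ needed on the four-dimensional distribution $\operatorname{Ker}\eta\subset T\tilde{\mc{X}}^J_1$ that the contact condition breaks down. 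There is no genuine analytic difficulty here; the only point requiring care is the logical one, namely that $\eta\wedge(\dd\eta)^2\neq 0$ is a property of $\eta$ alone, so its failure for $\lambda_6$ rules out \emph{any} completion $(\Phi,\xi)$ to an almost contact structure in which $\lambda_6$ is the contact form.
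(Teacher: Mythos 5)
Your treatment of the final claim is correct and is essentially the standard argument: with $\eta=\lambda_6=\dd\kappa-p\,\dd q+q\,\dd p$ one gets $\dd\eta=-2\,\dd p\wedge\dd q$, hence $(\dd\eta)^2=0$ and $\eta\wedge(\dd\eta)^2=0$ on the five-dimensional manifold $\tilde{\mc{X}}^J_1$, so $\lambda_6$ fails the contact condition and no triple $(\Phi,\xi,\eta)$ of the required kind can exist; your identification of $\operatorname{Ker}\dd\eta=\operatorname{span}(\pa_x,\pa_y,\pa_\kappa)$, which shows no Reeb field is singled out, and the contrast with the Heisenberg slice $(p,q,\kappa)$, where $\lambda_6$ \emph{is} contact, are apt. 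Note that the paper itself offers no proof at this point: Proposition \ref{Pr5} is quoted verbatim from \cite[Proposition 5.6]{SB19}, and your degeneracy computation agrees with the argument given there (this is precisely why $\tilde{\mc{X}}^J_1$ is treated in this line of work as an almost cosymplectic rather than a contact manifold).

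There is, however, a genuine gap: your opening claim that ``the assertion reduces to a property of the one-form $\lambda_6$ alone'' is not an accurate reading of the proposition. The proposition also asserts that the metric \eqref{linvG} is left-invariant under the $G^J_1(\R)$-action, that its matrix is \eqref{335}, and that the $(x,y,p,q)$-block coincides with the balanced metric of $\mc{X}^J_1$ with $c_1=4\alpha$, $c_2=\gamma$; none of this is addressed in your proposal, and none of it is a property of $\lambda_6$ alone. In the context of this paper these claims are handled as follows: the invariance is the $n=1$ case of Theorem \ref{PRR2}, i.e.\ of \eqref{BIGM}, and rests on two inputs --- the $G^J_1(\R)_0$-invariance of $\dd s^2_{\mc{X}^J_1}$ and the $G^J_1(\R)$-invariance of the one-form $\lambda^R=\lambda_6$ established in Lemma \ref{actM}\,d) --- while the matrix \eqref{335} and the balanced-metric identification follow by expanding \eqref{linvG} and matching coefficients. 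A complete proof must at least invoke these facts; as written, your argument establishes only the last sentence of the proposition.
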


 \subsection{The inverse of the invariant metric matrix on $\tilde{\mc{X}}^J_2 $} 

We particularize formula \eqref{312a} for the case $n=2$ and we
have
\begin{Proposition}\label{PR3}  
\begin{equation*}
   A_{11}=\left(\begin{array}{cc}  \left(\begin{array}{c}\gamma\left(\begin{array} {cc}y_{11}&
                                                                       y_{12}\\y_{21} &y_{22}
                                                             \end{array}\right)^{-1}
                                                                                        \\
                                                                                        +\delta
                                                                                        \left(\begin{array}{c}p_1\\
                                                                                                p_2\end{array}\right)\left(\begin{array}{cc}
                                                                                                                   p_1&p_2\end{array}\right)\end{array}\right)&
                                                                                                                                                                 \left(\begin{array}{c}\gamma
 \left(\begin{array}{cc}y_{11}&y_{12}\\y_{21}&y_{22}\end{array}\right)^{-1}\left(\begin{array}{cc}x_{11}&x_{12}\\x_{21}
 &x_{22}\end{array}\right)\\-\delta\left(
   \begin{array}{c}q_1\\q_2\end{array}\right)\left(\begin{array}{cc}p_1&p_2\end{array}\right)\end{array}\right)
\\
   \left(\begin{array}{c}         \gamma
           \left(\begin{array}{cc}x_{11}&x_{12}\\x_{21}&
                                                         x_{22}\end{array}\right)
                                                        \left(\begin{array}{cc}y_{11}&y_{12}\\y_{21}&
                                                                                                      y_{22}\end{array}\right)^{-1}   \\
                  -\delta\left(\begin{array}{c}p_1\\p_2\end{array}\right)
     \left(\begin{array}{cc}q_1&q_2\end{array}\right)\end{array}\right)&
                                                                         \left( \begin{array}{c}\gamma  \begin{array}{c} \left( \begin{array}{cc}\!y_{11}\!&y_{12}\!\\\!y_{21}\!&\!y_{22}\!\end{array}\right)\end{array} \!\!+\!\!
                  \gamma\left(\begin{array}{cc}\!x_{11}\!&\!x_{12}\!\\\!x_{21}\!&\!x_{22}\!\end{array}\right)\\
                                                                                  \times\left(\begin{array}{cc}y_{11}&y_{12}\\y_{21}&y_{22}\end{array}\right)^{-1}\left(\begin{array}{cc}x_{11}&x_{12}\\x_{21}&x_{22}\end{array}\right)
                                                                                                                                                                                                                
                                                                                \\+\delta
                                                                                  \left(\begin{array}{c}q_1\\q_2\end{array}\right)\left(\begin{array}{cc}q_1&
                                                                                                                                                              q_2\end{array}\right)\end{array}\right)\end{array}\right)
\end{equation*}
 where                                                                       
\[ y^{-1}=
  \left(\begin{array}{cc}y_{11} & y_{12}\\ y_{21} &
                                                    y_{22}\end{array}\right)^{-1}=\frac{\left(\begin{array}{cc}y_{22}
                                                                                                &-y_{12}\\-y_{21}&
                                                                                                                   y_{11}\end{array}\right)}{D},~~ D:=y_{11}y_{22}-y_{12}y_{21}.
 \]
Particularizing \eqref{XYY} for $n=2$ we get
 \begin{subequations}
   \begin{align}
   a_{11} &=[\gamma y^{-1}\!+\!\delta\left(\begin{array}{c}p_1\\p_2
                                     \end{array}\right)\left(\begin{array}{cc}p_1&p_2\end{array}\right)-(\gamma
                                   y^{-1}x-\delta
                                   \left(\begin{array}{c}q_1\\q_2\end{array}\right)\otimes
                                           \left(\begin{array}{cc}p_1&p_2\end{array}\right))\times\\
     &\times (\gamma( y\!\!+\!\!xy^{-1}x)
     \!\!+\!\!\delta \left(\begin{array}{c}q_1\\q_2\end{array}\right)\otimes\left(\begin{array}{cc}q_1&q_2\end{array}\right))^{-1}(\gamma
                                                     xy^{-1}\!\!-\!\!\delta \left(\begin{array}{c}p_1\\p_2\end{array}\right)\left(\begin{array}{cc}q_1&q_2\end{array}\right)]^{-1}
   \\
a_{22}= &  [\gamma (y\!\!+\!\!x\!y^{-1}x) \!\!+\!\!\delta
          \left(\begin{array}{c}q_1\\q_2\end{array}\right)\otimes
     \left(\begin{array}{cc}q_1&q_2\end{array}\right)\! \!-\!\!(\gamma xy^{-1}\!\!-\!\!\delta \left(\begin{array}{c}p_1\\p_2\end{array}\right)
                                       \otimes\left(\begin{array}{cc}
                                                q_1&q_2\end{array}\right))
                                 \nonumber\\
     & \times (\gamma
                                                     y^{-1}\!+\!\delta
                                                     \left(\begin{array}{c}p_1\\p_2\end{array}\right)\otimes\left(\begin{array}{cc}p_1&p_2\end{array}\right))^{-1}(\gamma
                                                                                                                                 y^{-1}x\!\!-\!\!\delta\left(\begin{array}{c}
                                                                                                                                 q_1\\
                                                                                                                                 q_2\end{array}\right)\otimes\left(\begin{array}{cc}
                                                                                                                                 p_1&
                                                                                                                                      p_2\end{array}\right))]^{-1}\\
     a_{12} & =-(\gamma y^{-1}\!\!+\!\!\delta
     \left(\begin{array}{c}p_1\\p_2\end{array}\right)\otimes\left(\begin{array}{cc}p_1&p_2\end{array}\right))^{-1}(\gamma
                                                                                        y^{-1}x\!\!-\!\!\delta
                                                                                        \left(\begin{array}{c}q_1\\q_2\end{array}\right)\otimes
     \left(\begin{array}{cc}p_1&p_2\end{array}\right))a_{22}\\
    a_{21}&= -a_{22}(\gamma
    xy^{-1}\!-\!\delta\left(\begin{array}{c}p_1\\p_2\end{array}\right)\otimes
    \left(\begin{array}{cc}q_1&q_2\end{array}\right))(\gamma
    y^{-1}\!\!+\!\!\delta
   \left(\begin{array}{c}p_1\\p_2\end{array}\right)\otimes\left(\begin{array}{cc}p_1&p_2\end{array}\right))^{-1}
   \end{align}
  \end{subequations}
  With \eqref{316}, \eqref{312}  and \eqref{3166},  we get for $n=2$ 
    \begin{subequations}
      \begin{align}
    b_{11}\gamma &= \left(\begin{array}{cc}y^{-1} & y^{-1}x\\
      xy^{-1} &  y+xy^{-1}x\end{array}\right)^{-1}=
    \left(\begin{array}{cc}(y+xy^{-1}x)^{-1} &-yx^{-1}\\-xy^{-1}& y^{-1}\end{array}\right)\\
  b_{22}\delta &=\left[1-\delta \left(\begin{array}{cc}-p& q\end{array}\right)\left(\begin{array}{cc}a_{11}&a_{12}
  \\a_{21}&a_{22}\end{array}\right)\left(\begin{array}{c}-p^t\\
                                           q^t\end{array}\right)\right]^{-1}\nonumber\\
   & = \left[1-\delta(pa_{11}p^t-qa_{21}p^t-pa_{12}q^t+qa_{22}q^t) \right]^{-1}\\
\frac{b_{12}}{-\delta}& =\left(\begin{array}{c}
    -a_{11}\left(\begin{array}{c}p_1\\p_2\end{array}
        \right)+a_{12}\left(\begin{array}{c}q_1\\q_2\end{array}\right)\\
        -a_{21}\left(\begin{array}{c}p_1\\p_2\end{array}\right)+a_{22}\left(\begin{array}{c}q_1\\q_2\end{array}\right)\end{array}\right) b_{22}\\
  \frac{b_{21}}{-\delta}&=b_{22}\left(-\left(\begin{array}{cc}p_1\!&\!p_2\end{array}\right)
                                                                   a_{11}+\left(\begin{array}{cc}q_1\!&\!q_2\end{array}\right)
                                                                                                      a_{21},-\left(\begin{array}{cc}p_1\!&\!p_2\end{array}\right)
                                                                                                                                    a_{12}\!+\!\left(\begin{array}{cc}q_1\!&\!q_2\end{array}\right)a_{22}\right)
      \end{align}
      \end{subequations}
    \end{Proposition}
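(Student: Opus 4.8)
The plan is to derive Proposition~\ref{PR3} as the pure specialization to $n=2$ of the formulas already established in Propositions~\ref{PR1} and~\ref{PR2}, together with the partitioned-inverse identities \eqref{inv} and \eqref{3166}; no new analytic content is needed, only careful bookkeeping with $2\times2$ blocks and $1\times2$ vectors. First I would fix the $n=2$ data: $p=(p_1,p_2)$, $q=(q_1,q_2)\in M(1,2,\R)$, $x,y\in MS(2,\R)$, and record the explicit inverse $y^{-1}=D^{-1}\left(\begin{smallmatrix}y_{22}&-y_{12}\\-y_{21}&y_{11}\end{smallmatrix}\right)$, $D=y_{11}y_{22}-y_{12}y_{21}$. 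The single structural fact used throughout is the identification recorded after \eqref{PQK}, namely that for row vectors the Kronecker symbols $p^t\otimes p$, $q^t\otimes q$, $p^t\otimes q$ are the rank-one $2\times2$ outer products $\left(\begin{smallmatrix}p_1\\p_2\end{smallmatrix}\right)(p_1\ p_2)$ and the like, with $p^tq=q^tp$. Substituting these into \eqref{312a} and \eqref{XYY} replaces every tensor symbol by an explicit matrix and yields the displayed forms of $A_{11}$ and of $a_{11},a_{22},a_{12},a_{21}$ by direct transcription.

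The only step beyond transcription is the block $b_{11}$. I would begin from \eqref{316a}, $b_{11}=(A_{11}-A_{12}\delta^{-1}A_{21})^{-1}$, and compute the correction using the explicit $A_{12}=\left(\begin{smallmatrix}-\delta p^t\\ \delta q^t\end{smallmatrix}\right)$ and $A_{21}=(-\delta p,\ \delta q)$ from \eqref{312}, obtaining $A_{12}\delta^{-1}A_{21}=\delta\left(\begin{smallmatrix}p^tp&-p^tq\\-q^tp&q^tq\end{smallmatrix}\right)$. The key observation is that, under the identification $p^tq=q^tp$, this is exactly the block of $\delta$-weighted rank-one terms occurring in $A_{11}$ through \eqref{312a}, so the subtraction cancels them and leaves $A_{11}-A_{12}\delta^{-1}A_{21}=\gamma\left(\begin{smallmatrix}y^{-1}&y^{-1}x\\ xy^{-1}&y+xy^{-1}x\end{smallmatrix}\right)$. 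This is precisely $\gamma$ times the $(q,p)$-block of $g_{\mc{X}^J_n}$ in \eqref{GG}, so its inverse can be read off directly from Proposition~\ref{PR1}, giving the stated closed form for $\gamma b_{11}$.

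For the remaining entries I would feed $A_{11}^{-1}=\left(\begin{smallmatrix}a_{11}&a_{12}\\a_{21}&a_{22}\end{smallmatrix}\right)$ together with the explicit $A_{12},A_{21}$ and $A_{22}=\delta$ into \eqref{316b}--\eqref{316d}. Expanding $A_{21}A_{11}^{-1}A_{12}=\delta^2(pa_{11}p^t-qa_{21}p^t-pa_{12}q^t+qa_{22}q^t)$ and inserting it into \eqref{316d} gives $b_{22}=\delta^{-1}[1-\delta(\cdots)]^{-1}$ in the displayed form, and the partial products $A_{11}^{-1}A_{12}$ and $A_{21}A_{11}^{-1}$ immediately yield $b_{12}=-A_{11}^{-1}A_{12}b_{22}$ and $b_{21}=-b_{22}A_{21}A_{11}^{-1}$ in the stated column/row shape. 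The main obstacle is thus not any single hard computation but the rank-one cancellation in $b_{11}$: once it is recognized that the off-diagonal outer products coincide under $p^tq=q^tp$, every other line reduces to mechanical substitution, and the entries $a_{11},a_{22},a_{12},a_{21}$ may be kept in the partitioned form inherited from \eqref{XYY} rather than expanded further.
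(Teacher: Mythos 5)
Your proposal is correct and takes essentially the same approach as the paper: Proposition \ref{PR3} is indeed a pure particularization, with $A_{11}$ and the $a_{ij}$ obtained by substituting the rank-one outer products for the Kronecker symbols into \eqref{312a} and \eqref{XYY}, and the $b$-blocks obtained from \eqref{3166}, where your rank-one cancellation $A_{11}-A_{12}\delta^{-1}A_{21}=\gamma\left(\begin{smallmatrix}y^{-1}&y^{-1}x\\ xy^{-1}&y+xy^{-1}x\end{smallmatrix}\right)$ is precisely what Proposition \ref{PR2} already records in the identity $b_{11}=\left(\begin{smallmatrix}g^{qq}&g^{qp}\\ g^{pq}&g^{pp}\end{smallmatrix}\right)$. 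One caveat: reading that inverse off Proposition \ref{PR1} yields $\gamma b_{11}=\left(\begin{smallmatrix}y+xy^{-1}x&-xy^{-1}\\ -y^{-1}x&y^{-1}\end{smallmatrix}\right)$, so your derivation produces the form consistent with \eqref{GG1}, whereas the explicit matrix written after the second equality in the proposition's display (with entries $(y+xy^{-1}x)^{-1}$, $-yx^{-1}$ and $-xy^{-1}$) contains typos that your argument silently corrects rather than reproduces.
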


Now we particularize \eqref{PQK} for $\tilde{\mc{X}}^J_2 $ 
\begin{equation}\label{mdet}
g' _{ij}=g'_{\tilde{\mc{X}}^J_2}(x,y,q,p,\kappa), ~~ i,j =1,2,
\end{equation}
where the matrix elements are 
\begin{subequations}\label{SUB2}
  \begin{align}
g'_{11} & \!=\!\gamma\left(\begin{array}{cc}y_{11}&y_{12}\!\\y_{21}&\!
   y_{22}\end{array}\right)^{-1}\!+\!\delta\left(\begin{array}{c}\!p_1\\\!\!p_2\!\end{array}\right)
                                                                   \left(\begin{array}{cc}p_1&p_2\end{array}\right)\!=\!
    \frac{\gamma}{D}\left(\begin{array}{cc}y_{22}&\!-y_{12}\\\!-y_{21} &y_{11}\end{array}\right)\!+\!\delta\left(\begin{array}{cc}p_1p_1&
                                                                      p_1p_2\\p_2p_1& p_2p_2\end{array}\right) ;\nonumber\\
     g'_{12} &\! =\!\gamma\left(\begin{array}{cc}y_{11}&y_{12}\\y_{21}&
                                                                  y_{22}\end{array}\!\!\right)^{-1}\left(\!\begin{array}{cc}x_{11}&x_{12}\\x_{21}&x_{22}\end{array}\!\right)\!-\!
                                                                                                                                                   \delta\left(\!\begin{array}{c}\!q_1\!\\\!q_2\!\end{array}\!\right)\left(\begin{array}{cc}p_1&p_2\end{array}
                                                                                                                                                                                                                                                 \right)\nonumber\\
    &=
      \frac{\gamma}{D}\left(\begin{array}{cc}y_{22}x_{11}-y_{12}x_{21}&
                                                                        y_{22}x_{12}-y_{12}x_{22}\\-y_{21}x_{11}+y_{11}x_{21}& -y_{21}x_{12}+y_{11}x_{22}\end{array}\right)-
                                                                                                   \delta\left(\begin{array}{cc}q_1p_1&q_1p_2\\q_2p_1&q_2p_2\end{array}\right);\nonumber\\
    g'_{13} & =-\delta\left(\begin{array}{c}p_1\\p_2\end{array}\right)\nonumber;\\
    g'_{21} &= \gamma \left(\begin{array}{cc}x_{11}& x_{12}\\x_{21}&
                                                                     x_{22}\end{array}\right)\left(\begin{array}{cc}y_{11}&y_{12}\\y_{21}&y_{22}\end{array}\right)^{-1}
                                                                                                                                           -\delta\left(\begin{array}{c}p_1\\p_2\end{array}\right)\left(\begin{array}{cc}q_1,q_2\end{array}\right);\nonumber\\
    &
      =\frac{\gamma}{D}\left(\begin{array}{cc}x_{11}y_{22}-x_{12}y_{21}
                                    & -x_{11}y_{12}+
                                  x_{12}y_{11}\\x_{21}y_{22}-x_{22}y_{21}&
                                                                           -x_{21}y_{12}+x_{22}y_{11}\end{array}\right)-\delta\left(\begin{array}{cc}p_1q_1&
                                                                                                                                                             p_1q_2
                                                                                                                                      \\p_2q_1&p_2q_2\end{array}\right);\nonumber\\
    g'_{22} &\!= \! \gamma
              \left[\left(\!\begin{array}{cc}\!y_{11}\!&\!y_{12}\!\\\!y_{21}\!&\!y_{22}\!\end{array}\!\right)\!+\!\left(\!\begin{array}{cc}x_{11}&x_{12}\\x_{21}&x_{22}\!\end{array}\!\right)
   \left(\!\begin{array}{cc}y_{11}&y_{12}\\y_{21}&y_{22}\!\end{array}\!\right)^{-1}\!\!
  \left(\!\begin{array}{cc}\!x_{11}&\!x_{12}\\\!x_{21}&\!x_{22}\!\end{array}\!\right)\!\right]\!\!+\!\! \delta\left(\!\begin{array}{c}q_1\\q_2\!\end{array}\!\right)
          \left(\!\begin{array}{cc}\!q_1\!&q_2\!\end{array}\!\right)\nonumber\\
    & =    \frac{\gamma}{D}\left(\begin{array}{cc}
                                  \begin{array}{c}
                                    (x_{11}^2+y_{11}^2)y_{22}+(x_{12}^2-y_{12}^2)y_{11}\\-2x_{11}x_{12}y_{12}\end{array}&\begin{array}{c}\!(-x_{12}^2\!-\!x_{11}x_{22}+\!\!y_{11}y_{22}-y^2_{12})y_{12}\\
                                                                                                                           +x_{11}x_{12}y_{22}\!+\!x_{12}x_{22}y_{11}\end{array}\\
                                   \begin{array}{c}
                                     \\ \end{array}\\ \begin{array}{c}\!(-x_{12}^2\!-\!x_{11}x_{22}+\!\!y_{11}y_{22}-y^2_{12})y_{12}\\
                                                                                                                           +x_{11}x_{12}y_{22}\!+\!x_{12}x_{22}y_{11}\end{array}
                                   &\begin{array}{c} (x_{12}^2-y_{12}^2)y_{22}\\\!+\!(x_{22}^2\!+\!y_{22}^2)y_{11}\!-\!2x_{12}x_{22}y_{12}\end{array}\end{array}\right)+\nonumber\\
      & ~~+\delta\! \left(\begin{array}{cc}q_1^2& q_1q_2\\q_2q_1&q_2^2\end{array}\right)
      \nonumber ;\\
    g'_{23}&= \delta \left(\begin{array}{cc}q_1
                             \\q_2\end{array}\right);~~ g'_{31}
    =-\delta \left(\begin{array}{cc}p_1& p_2\end{array}\right); ~~
    g'_{32}=\delta\left(\begin{array}{cc}q_1& q_{2}\end{array}\right);~~
    g'_{33}=\delta .\nonumber
    \end{align}
  \end{subequations}
  We calculate the determinant of the matrix $g'_{\tilde{\mc{X}}^J_{2}}$ given in \eqref{mdet} and then we find its inverse. Since  $x,~y$ are  symmetric matrices,   we write $  g'_{\tilde{\mc{X}}^J_2}(x,y,q,p,\kappa) \in MS(5,\R) $ as
\begin{equation}\label{DDD}
g'_{\tilde{\mc{X}}^J_2}(x,y,q,p,\kappa)=(g')_{i,j=1,3}=(f)_{i,j=1,5},
\end{equation}
where 
\begin{subequations}
\begin{align}
 f_{1 \le j} =
  & \begin{array}{ccccc}\begin{array}{c}\frac{\gamma}{D}y_{22}\\+\delta
                          p_1p_1\end{array}&\begin{array}{c}-\frac{\gamma}{D}y_{12}\\+\delta
                                              p_1
                                              p_2\end{array}&\begin{array}{c}\frac{\gamma}{D}(y_{22}x_{11}-y_{12}x_{12})\\-\delta
                                                               q_1p_1\end{array}&\begin{array}{c}\frac{\gamma}{D}(y_{22}x_{12}-y_{12}x_{22})\\-\delta
                                                                                   q_1p_2\end{array}&-\delta
                                                                                                      p_1\end{array}\nonumber\\
  f_{2 \le j} =
  & \begin{array}{cccc}\begin{array}{c}\frac{\gamma}{D}y_{11}\\ \delta
                         p_2
                         p_2\end{array}&\begin{array}{c}\frac{\gamma}{D}(-y_{12}x_{11}+y_{11}x_{12})\\-\delta
                                          q_2
                                          p_1\end{array}&\begin{array}{c}\frac{\gamma}{D}(-y_{12}x_{12}+y_{11}x_{22})\\-\delta
                                                           q_2
                                                           p_2\end{array}&-\delta
                                                                        p_2\end{array}\nonumber\\
  f_{3 \le j} =
  & \begin{array}{ccc} \begin{array}{c}\frac{\gamma}{D}[(x^2_{11}\!+\!y_{11}^2)y_{22}\!+\!(x^2_{12}\!-\!y^2_{12})y_{11}\!\\-\!2x_{11}x_{12}y_{12}]\!+\!\delta q^2_1\end{array}&
 \begin{array}{c}\frac{\gamma}{D}[(-x^2_{12}\!-\!x_{11}x_{22}\!+\!y_{11}y_{22}\!-\!y^2_{12}\!)\!y_{12}\\\!+\!x_{11}x_{12}y_{22}\!+\!x_{12}x_{22}y_{11}]\!+\!\delta
   q_1q_2\end{array}   &\delta  q_1  \end{array}\nonumber\\
   f_{4 \le j}
  =& \begin{array}{cc}\begin{array}{c}\frac{\gamma}{D}[(x_{12}^2-y^2_{12})y_{22}+(x^2_{22}+y^2_{22})y_{11}\\-2x_{12}x_{22}y_{12}]+\delta
                        q^2_2\end{array} &\delta
                                           q_2\end{array}\nonumber\\
  f_{55}~~=&~~~\delta\nonumber 
\end{align}
\end{subequations}

By the Laplace expansion formula \cite{lap} we have
\[
\text{det}
g'_{\tilde{\mc{X}}^J_2}(x,y,q,p,\kappa)\!=\!\sum_{i=1}^5(-1)^{1+i}f_{1i}M_{1i}\!=\!f_{11}M_{11}\!-\!f_{12}M_{12}\!+\!f_{13}M_{13}\!-\!
f_{14}M_{14}\!+\!f_{15}M_{15},
\]
where the minor $M_{ij}$ is the determinant of the $4\times 4$ - matrix obtained
from   $g'_{ij}$
removing the element $ij$.

We calculate $M_{11}$
\begin{subequations}
 \begin{align}
M_{11} &=\det \left(\begin{array}{cccc}f_{22}&f_{23}&
                                             f_{24}&f_{25}\\f_{32}&f_{33}&f_{34}&f_{35}\\f_{42}&f_{43}&f_{44}&f_{45}\\f_{52}&
                                                                                                                       f_{53}&
                                                                                                                               f_{54}&f_{55}\end{array}\right)\nonumber\\
   &=-f_{25}A\left(\begin{array}{ccc}3&4&5\\2&3&4\end{array}\right)\!+\!
                                                 f_{35}A\left(\begin{array}{ccc}2&4&5\\2&3&4\end{array}\right)\!-\!f_{45}A\left(\begin{array}{ccc}2&3&5\\2&3&4\end{array}\right)\!+\!f_{55}A\left(\begin{array}{ccc}2&3&4\\2&3&4\end{array}\right)\nonumber\end{align}\end{subequations}
 We obtain the determinant
\begin{equation}\label{DDET}
\text{det}
g'_{\tilde{\mc{X}}^J_2}(x,y,q,p,\kappa)\!=\! \delta
[\gamma^4\!-\!\gamma^3\frac{\delta I_2}{D}A\!-\!\gamma^2\frac{(\delta
  I_2)^2}{D}B\!-\!\gamma\frac{(\delta I_2)^3}{D}A\!+\!(\delta I_2)^4],
\end{equation}
where
\begin{subequations}\label{I2}\begin{align}
I_2:&\!=\!p_2q_1-p_1q_2,\quad A:=\!2[(x_{11}-x_{22})y_{12}-x_{12}(y_{11}-y_{22})],\\
  B:&\!=\! 4x_{12}^2+(x_{11}-x_{22})^2+y_{11}^2+y_{22}^2+2y^2_{12}.
  \end{align}
 \end{subequations}
Let
\begin{equation}\label{NNN}
N:=\frac{\gamma \delta}{\text{det}~(f)\times
  D}.
\end{equation}                
Using the {\it Ricci} package of {\it Wolfram Mathematica} 
system  \cite{MATH}, we get:
 \begin{Proposition}\label{PR5}
   The inverse of the matrix \eqref{DDD} is given by
\begin{subequations}
  \begin{align}
      \frac{f^{11}}{N} &\!=\!\gamma^2 \big[y_{11} (x_{12}^2\!\!-\!\!y_{12}^2)\!\!+\!\!y_{22} (x_{11}^2\!\!+\!\!y_{11}^2)\!\!-\!\!2 x_{11}x_{12}y_{12}\big]\!\!+\!\!2\gamma\delta I_2\big[\!-\!x_{12}(x_{12}^2\!\!+\!\!y_{12}^2\!\!-\!\! x_{11}x_{22}\!\!+\!\!y_{11} y_{22})\nonumber\\
    &\quad +y_{12}(x_{22}y_{11}+x_{11} y_{22})\big]\!-\!(\delta I_2)^2\big[y_{22} (x_{12}^2-y_{12}^2)+y_{11} (x_{22}^2+y_{22}^2)\!-\!2 x_{22}x_{12}y_{12}\big]\nonumber\\
     \frac{f^{12}}{N}
        &\!= \gamma^2\big[x_{12}(x_{22}y_{11}+x_{11}y_{22})-y_{12}(x_{12}^2+y_{12}^2+x_{11}x_{22}-y_{11}y_{22})\big] \nonumber\\
    &\quad +\gamma \delta I_2\big[x_{11}(x_{12}^2\!\!-\!\!y_{12}^2\!\!+\!\!x_{22}^2\!\!+\!\!y_{22}^2)\!-\!\!x_{22}(x_{12}^2\!\!-\!\!y_{12}^2\!\!+\!\!x_{11}^2\!\!+\!\!y_{11}^2)\!\!+\!\!2 x_{12}y_{12}(y_{11}\!\!-\!\!y_{22})\big]\nonumber\\
    &\quad+ (\delta I_2)^2\big[x_{12}(x_{22}y_{11}+x_{11}y_{22})-y_{12}(x_{12}^2+y_{12}^2+x_{11}x_{22}-y_{11}y_{22})\big]\nonumber\\
    \frac{f^{13}}{N}&=\gamma^2(x_{12}y_{12}\!\!-\!\!x_{11}y_{22})\!-\!\gamma\delta
                      I_2[x_{12}(x_{11}\!+\!x_{22})\!+\!y_{12}(y_{11}\!\!+\!\!y_{22})]\!+\!(\delta
                      I_2)^2(x_{22}y_{11}\!\!-\!\!x_{12}y_{12})\nonumber\\
    \frac{f^{14}}{N}&=\gamma^2(-x_{12}y_{11}+x_{11}y_{12})+\gamma\delta
                      I_2(x^2_{11}+2x^2_{12}-x_{11}x_{22}+y^2_{11}+y^2_{12})\nonumber\\
                     &\quad +(\delta
                       I_2)^2(2x_{11}y_{12}-2x_{12}y_{11}-x_{22}y_{12}+x_{12}y_{22})
                       -\frac{D(\delta I_2)^3}{\gamma}\nonumber\\
    \frac{f^{15}}{N}
      &=\!\gamma^2\big[q_1(x_{11}y_{22}\!-\!x_{12}y_{12})\!+\!q_2(x_{12}y_{11}\!-\!x_{11}y_{12})\!+\! p_1\big(y_{11}(x_{12}^2\!-\!y_{12}^2)\!+\!y_{22}(x_{11}^2\!+\!y_{11}^2)\nonumber\\
      &\quad -2x_{11}x_{12} y_{12}\big)\!+\! p_2\big(\!-\!y_{12}(x_{12}^2\!+\!y_{12}^2\!+\!x_{11}x_{22}\!-\!y_{11}y_{22})\!+\!x_{12}(x_{11}y_{22}\!+\!x_{22}y_{11})\big)\big]\nonumber\\
      &\quad +\gamma\delta I_2 \Big[q_1\big(x_{12}(x_{11}\!+\!x_{22})\!+\!y_{12}(y_{11}\!+\!y_{22})\big)
      \!-\!q_2(2 x_{12}^2\!-\!x_{11}x_{22}\!+\!x_{11}^2\!+\!y_{11}^2\!+\!y_{12}^2)
      \nonumber\\
      &\quad+2p_1\big( x_{12}(x_{11}x_{22}-y_{11}y_{22}-x_{12}^2-y_{12}^2)+y_{12}(x_{22}y_{11}+x_{11}y_{22})\big)
      \nonumber\\
      &\quad+p_2\big(x_{11}(x_{22}^2\!\!+\!\!y_{22}^2\!\!+\!\!x_{12}^2\!\!-\!\!y_{12}^2)\!\!-\!\!x_{22}(x_{11}^2\!\!+\!\!y_{11}^2\!\!+\!\!x_{12}^2\!\!-\!\!y_{12}^2)\!\!+\!\!2x_{12}y_{12}(y_{11}\!\!-\!\!y_{22})\big)\Big]\nonumber\\
      &\quad+(\delta I_2)^2\Big[q_1(x_{12}y_{12}-x_{22}y_{11})+q_2(2x_{12}y_{11}-2x_{11}y_{12}+x_{22}y_{12}-x_{12}y_{22})\nonumber\\
      &\quad +p_2 \big( x_{12}(x_{22}y_{11}+x_{11}y_{22})-y_{12}(y_{12}^2+x_{12}^2+x_{11}x_{22}-y_{11}y_{22})\big)  \nonumber\\
      &\quad - p_1\big(y_{11}(x_{22}^2+y_{22}^2)+ y_{22}(x_{12}^2-y_{12}^2)-2x_{12}x_{22}y_{12}\big)\Big]+\frac{D (\delta I_2)^3 }{\gamma} q_2\nonumber\\
    \frac{f^{22}}{N}&=\gamma^2\big[y_{11}(x^2_{22}\!+\!y^2_{22})\!+\!y_{22}(x^2_{12}\!-\!y^2_{12})\!-\!2x_{22}x_{12}y_{12}\big]
                      \!+\!2\gamma \delta
                      I_2\big[-y_{12}(x_{11}y_{22}\!+\!x_{22}y_{11})\nonumber
                      \\ &\quad +x_{12}(x^2_{12}\!\!+\!\!y^2_{12}\!\!-\!\!x_{11}x_{22}\!\!+\!\!y_{11}y_{22})\big]
                    \!\!-\!\!(\delta  I_2)^2\big[y_{22}(x^2_{11}\!\!+\!\!y^2_{11})\!\!+\!\!y_{11}(x^2_{12}\!\!-\!\!y^2_{12})\!\!-\!\!2x_{11}x_{12}y_{12}\big]\nonumber\\
    \frac{f^{23}}{N}&=\gamma^2(x_{22}y_{12}-x_{12}y_{22})+\gamma
                      \delta
                      I_2(-2x_{12}^2+x_{11}x_{22}-x^2_{22}-y^2_{12}-y^2_{22})\nonumber\\
                     & \quad +(\delta
                      I_2)^2(x_{12}y_{11}-x_{11}y_{12}+2x_{22}y_{12}-2x_{12}y_{22})+\frac{D(\delta
                       I_2)^3}{\gamma}\nonumber\\
 \frac{f^{24}}{N}&\!=\!-\gamma^2(x_{22}y_{11}\!-\!x_{12}y_{12})\!\!+\!\!\gamma \delta
                   I_2[x_{12}(x_{11}\!\!+\!\!x_{22})\!\!+\!\!y_{12}(y_{11}\!\!+\!\!!y_{22})]\!-\!(\delta
                   I_2)^2(x_{12}y_{12}\!\!-\!\!x_{11}y_{22})\nonumber\\
    \frac{f^{25}}{N}&=\gamma^2\Big[q_1(x_{12}y_{22}-x_{22}y_{12})
    +q_2(x_{22}y_{11}-x_{12}y_{12})
    +p_1\big[x_{12}(x_{11}y_{22}+x_{22}y_{11})\nonumber\\
   & \quad-y_{12}(y_{12}^2\!\!+\!\!x_{12}^2\!\!+\!\!x_{11}x_{22}\!-\!y_{11}y_{22})\big]
  \!\!+\!\!p_2\big[y_{11}(x_{22}^2\!\!+\!\!y_{22}^2)\!\!+\!\!y_{22}(x_{12}^2\!-\!y_{12}^2)\!\!-\!\!2x_{12}x_{22}y_{12} \big]\Big]  \nonumber\\
  &\quad-\gamma \delta I_2\Big[\!-\!q_1(2x_{12}^2\!\!-\!\!x_{11}x_{22}\!\!+\!\!x_{22}^2\!\!+\!\!y_{12}^2\!\!+\!\!y_{22}^2)\!\!+\!\!q_2\big(x_{12}(x_{11}\!\!+\!\!x_{22})\!\!+\!\!y_{12}(y_{11}\!\!+\!\!y_{22})\big)\nonumber\\
  &\quad +2p_2\big[x_{12}(x_{11}x_{22}-x_{12}^2-y_{12}^2-y_{11}y_{22})+y_{12}(x_{22}y_{11}+x_{11}y_{22})\big]\nonumber\\
  &\quad +p_1\big[\!-\!x_{11}(x_{12}^2\!\!+\!\!x_{22}^2\!\!+\!\!y_{22}^2\!\!-\!\!y_{12}^2)\!\!+\!\!x_{22}(x_{11}^2\!\!+\!\!x_{12}^2\!\!+\!\!y_{11}^2\!\!-\!\!y_{12}^2)\!\!-\!\!2x_{12}y_{12}(y_{11}\!\!-\!\!y_{22})\big]\Big]\nonumber\\
  &\quad-(\delta I_2)^2\Big[q_1(x_{12}y_{11}-x_{11}y_{12}+2x_{22}y_{12}-2x_{12}y_{22}) +q_2(x_{11}y_{22}-x_{12}y_{12})\nonumber\\
   &\quad +p_1\big[-x_{12}(x_{11}y_{22}+x_{22}y_{11})
   +y_{12}(y_{12}^2+x_{12}^2+x_{11}x_{22}-y_{11}y_{22})\big]\nonumber\\
  &\quad +p_2\big[y_{11}(x_{12}^2\!\!-\!\!y_{12}^2)\!\!+\!\!y_{22}(x_{11}^2\!\!+\!\!y_{11}^2)\!\!-\!\!2x_{11}x_{12}y_{12} \big]\Big]
                  -\frac{D(\delta I_2)^3}{\gamma}q_1     \nonumber\\
    \frac{f^{33}}{N}& \!=\!\gamma^2y_{22}\!+\!2\gamma\delta I_2x_{12}\!-\!(\delta
                       I_2)^2y_{11}\nonumber\\
   \frac{ f^{34}}{N}&=-\gamma^2y_{12}-\gamma\delta
                      I_2(x_{11}-x_{22})-(\delta
                      I_2)^2y_{12}\nonumber\\
  \frac{ f^{35}}{N}
                     &=\gamma^2\big[-q_1 y_{22}+q_2y_{12}-p_1(x_{11}y_{22}-x_{12}y_{12})+p_2(x_{22}y_{12}-x_{12}y_{22})\big]
                     \nonumber\\
                     &\quad - \gamma\delta I_2\Big[ 2q_1 x_{12}-q_2(x_{11}-x_{22})+p_1\big(x_{12}(x_{11}+x_{22})+y_{12}(y_{11}+y_{22})\nonumber\\&\quad+p_2(2x_{12}^2-x_{11}x_{22}+x_{22}^2+y_{12}^2+y_{22}^2)\big)\Big]+(\delta I_2)^2\Big[q_1y_{11}+q_2y_{12}\nonumber\\
                     &\quad +p_1(x_{22}y_{11}-x_{12}y_{12})+p_2(x_{12}y_{11}-x_{11}y_{12}+2x_{22}y_{12}-2x_{12}y_{22})\Big] +\frac{D(\delta I_2)^3}{\gamma}p_2\nonumber\\
    \frac{ f^{44}}{N}&=\gamma^2y_{11}-2\gamma\delta I_2 x_{12}-(\delta
                       I_2)^2y_{22}\nonumber\\
    \frac{
    f^{45}}{N}&=\gamma^2\Big[q_1y_{12}-q_2y_{11}+p_1(x_{11}y_{12}-x_{12}y_{11})+p_2(x_{12}y_{12}-x_{22}y_{11})\Big]\nonumber\\
    &\quad+\gamma\delta I_2\Big[q_1(x_{11}-x_{22})+2q_2x_{12}+p_1(x_{11}^2+y_{11}^2+y_{12}^2 +2x_{12}^2-x_{11}x_{22})\nonumber\\
    &\quad+p_2\big(x_{12}(x_{11}+x_{22})+y_{12}(y_{11}+y_{22})\big)\Big]
    +(\delta I_2)^2\Big[q_1y_{12}+q_2y_{22}\nonumber\\
    &\quad+p_1(x_{12}y_{22}-x_{22}y_{12}+2x_{11}y_{12}-2x_{12}y_{11})+p_2(x_{11}y_{22}-x_{12}y_{12})\Big]-\frac{D(\delta I_2)^3}{\gamma}p_1  \nonumber\\
\frac{ f^{55}}{N}&= \frac{D\gamma^3 }{\delta}\!\!+\!\!\gamma^2\Big[q_1^2 y_{22}\!+\!q_2^2 y_{11}\!-\!2q_1 q_2 y_{12}\!\!+\!\!p_1^2\big[(x_{12}^2\!-\!y_{12}^2)y_{11}\!\!+\!\!(x_{11}^2\!\!+\!\!y_{11}^2)y_{22}\!\!-\!\!2x_{11}x_{12}y_{12}\big]\nonumber\\
&\quad+p_2^2\big[(x_{22}^2+y_{22}^2)y_{11} +(x_{12}^2-y_{12}^2)y_{22}-2x_{22}x_{12}y_{12}\big]+2p_1 p_2\big[x_{12}(x_{11}y_{22}+x_{22}y_{11})\nonumber\\
&\quad-(x_{12}^2\!\!+\!\!y_{12}^2\!\!+\!\!x_{11}x_{22}\!\!-\!\!y_{11}y_{22})y_{12}\big]\!\!+\!\!2 p_1 q_1 (x_{11}y_{22}\!\!-\!\!x_{12}y_{12})\!\!+\!\! p_1 q_2 (x_{12}y_{22}\!\!-\!\!x_{22}y_{12})\nonumber\\
&\quad+2p_2 q_1(x_{12}y_{11}\!\!-\!\!x_{11}y_{12})\!\!+\!\!2 p_2 q_2
 (x_{22}y_{11}\!\!-\!\!x_{12}y_{12})\Big]\!\!+
 \!\!2(\gamma\delta I_2)\Big[(q_1^2\!\!-\!\!q_2^2) x_{12}\!\!-\!\!q_1q_2(x_{11}\!\!-\!\!x_{22})
\nonumber\\
&\quad\!-\!(p_1^2-p_2^2)\big[x_{12}(x_{12}^2+y_{12}^2-x_{11}x_{22}+y_{11}y_{22})\!-\!y_{12}(x_{22}y_{11}+x_{11}y_{22})\big]\nonumber\\
&\quad+(p_1q_1\!-\!p_2q_2)\big[x_{12}(x_{11}\!\!+\!\!x_{22})\!\!+\!\!y_{12}(y_{11}\!\!+\!\!y_{22})\big]\!\!-\!\!\frac{1}{2}(p_1q_2\!\!+\!\!p_2q_1)(x_{11}^2\!\!-\!\!x_{22}^2\!\!+\!\!y_{11}^2\!\!-\!\!y_{22}^2)\nonumber\\
&\quad+p_1p_2\big[x_{11}(x_{12}^2\!\!+\!\!x_{22}^2\!\!+\!\!y_{22}^2\!\!-\!\!y_{12}^2)\!\!-\!\!x_{22}(x_{12}^2\!\!+\!\!x_{11}^2\!\!+\!\!y_{11}^2\!\!-\!\!y_{12}^2)\!\!+\!\!2x_{12}y_{12}(y_{11}\!\!-\!\!y_{22})\big]\Big]\nonumber\\
&\quad-(\delta I_2)^2\Big[q_1^2y_{11}\!\!+\!\!q_2^2y_{22}\!+\!2q_1q_2y_{12}\!\!+\!\!p_1^2\big[(x_{12}^2\!\!-\!\!y_{12}^2)y_{22}\!\!+\!\!(x_{22}^2\!\!+\!\!y_{22}^2)y_{11}\!\!-\!\!2x_{12}x_{22}y_{12}\big]\nonumber\\
&\quad+p_2^2\big[(x_{12}^2-y_{12}^2)y_{11}+(x_{11}^2+y_{11}^2)y_{22}-2x_{12}x_{11}y_{12}\big]-2p_1q_1(x_{12}y_{12}-x_{22}y_{11})\nonumber\\
&\quad-2p_1q_2(x_{12}y_{11}-x_{11}y_{12})-2p_2q_1(x_{12}y_{22}-x_{22}y_{12})-2p_2q_2(x_{12}y_{12}-x_{11}y_{22})\nonumber\\
&\quad-2p_1p_2\big[x_{12}(x_{11}y_{22}+x_{22}y_{11})- y_{12}(x_{12}^2+y_{12}^2+x_{11}x_{22}-y_{11}y_{22})\big]\Big]\!\!-\! \frac{D(\delta I_2)^3 }{\gamma}I_2\nonumber
    \nonumber 
  \end{align}
   \end{subequations}
   

\end{Proposition}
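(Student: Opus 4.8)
The plan is to obtain $f^{-1}$, where $f=g'_{\tilde{\mc{X}}^J_2}$ is the symmetric matrix \eqref{DDD}, from the adjugate formula $f^{ij}=(-1)^{i+j}M_{ij}/\det f$ (using that $f$, and hence every minor, is symmetric, so $M_{ij}=M_{ji}$). The determinant $\det f$ is already in closed form in \eqref{DDET}, so the entire content is the fifteen independent minors $M_{ij}$ on and above the diagonal. The normalisation $N$ of \eqref{NNN} is introduced precisely so that $f^{ij}/N=(-1)^{i+j}\frac{D}{\gamma\delta}M_{ij}$; displayed this way, each entry sorts itself into successive powers of the single combination $\delta I_2$ (with $I_2$ as in \eqref{I2}), and it is this grading that produces the compact expressions in the statement.

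First I would expand each $4\times4$ minor along its last row, equivalently last column --- the same Laplace step that produced \eqref{DDET} --- since that row/column carries only the elementary entries $-\delta p_i$, $\delta q_i$ and $\delta$ coming from $g'_{q\kappa},g'_{p\kappa},g'_{\kappa\kappa}$ in \eqref{SUB2}. This reduces every minor to a short combination of $3\times3$ determinants extracted from the Siegel--Jacobi block $\left(\begin{smallmatrix}g'_{qq}&g'_{qp}\\g'_{pq}&g'_{pp}\end{smallmatrix}\right)$. Each of those determinants is affine in $\delta$ along every row in which a $p$- or $q$-term sits, so expanding once more and collecting powers of $\delta$ gives the recurring pattern $\gamma^2[\cdots]+\gamma\delta I_2[\cdots]+(\delta I_2)^2[\cdots]$, extended in some entries by a $(\delta I_2)^3D/\gamma$ term and, for $f^{55}$, by a leading $\gamma^3D/\delta$ term. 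The combination $I_2=p_2q_1-p_1q_2$ is forced: all the $\delta$-dependence of $f$ is carried by the two covectors $p$ and $q$, and the determinants pair them antisymmetrically, so $\delta$ never appears without an accompanying $I_2$ in the bulk entries. The scalar coefficient polynomials --- e.g. $x_{12}(x_{11}y_{22}+x_{22}y_{11})-y_{12}(x_{12}^2+y_{12}^2+x_{11}x_{22}-y_{11}y_{22})$ --- are themselves $2\times2$ and $3\times3$ minors of the Siegel part, so they are computed once and reused across the fifteen entries.

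A structural way to organise and to cross-check this is the partitioned-inverse route of Proposition \ref{PR2}. Writing $f=\left(\begin{smallmatrix}A_{11}&A_{12}\\A_{21}&\delta\end{smallmatrix}\right)$ as in \eqref{312}, the formulas \eqref{3166} reduce everything to $4\times4$ inversions: $b_{11}=(A_{11}-\delta^{-1}A_{12}A_{21})^{-1}$ and $b_{22}=(\delta-A_{21}A_{11}^{-1}A_{12})^{-1}$, with $b_{12}=b_{21}^t=-A_{11}^{-1}A_{12}b_{22}$. Here $A_{11}^{-1}$ is exactly the $n=2$ reduced inverse already assembled in Proposition \ref{PR3} from \eqref{inv}, and $A_{21}A_{11}^{-1}A_{12}$ is the scalar $\delta^2(p\,a_{11}p^t-q\,a_{21}p^t-p\,a_{12}q^t+q\,a_{22}q^t)$ recorded in Proposition \ref{PR2}; the remaining Schur complement $A_{11}-\delta^{-1}A_{12}A_{21}$ is inverted again by \eqref{inv}. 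Since both routes compute the inverse of the same matrix, matching their outputs entry by entry is a stringent internal test. Two further checks I would run are $ff^{-1}=\mathbb{1}_5$ evaluated symbolically, and the limit $\delta\to0$, under which $\det f\to0$, the $(q,p)$-block $b_{11}$ tends to the Siegel--Jacobi inverse of Proposition \ref{PR1}, and the $\kappa$-entries diverge like $\delta^{-1}$, consistent with the explicit prefactors in the statement.

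The difficulty here is entirely one of scale rather than of principle: each cofactor is a polynomial of degree four in the ten variables $x_{ij},y_{ij},p_i,q_i$, and the decisive step --- collecting the expanded $4\times4$ and $3\times3$ determinants into the closed $\gamma,\delta I_2$-graded form, and recognising that the non-polynomial pieces regroup into the factor $D$ and the single invariant $I_2$ --- is where a hand computation becomes unreliable. This is the step I would carry out with a computer-algebra system, the \emph{Ricci} package under \emph{Wolfram Mathematica}, using it both to perform the symbolic minor expansions and to verify $ff^{-1}=\mathbb{1}_5$ once the candidate closed forms are in hand.
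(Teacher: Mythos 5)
Your proposal is correct and follows essentially the same route as the paper: a cofactor/Laplace expansion built on the determinant \eqref{DDET} with the normalisation \eqref{NNN}, carried out symbolically with the \emph{Ricci} package under \emph{Mathematica}, and cross-checked against the partitioned-inverse formulas of Proposition \ref{PR2}. That cross-check is exactly what the paper records in Remark \ref{R2}, so your extra verifications ($ff^{-1}=\mathbb{1}_5$ and the $\delta\to 0$ limit) only reinforce the same argument.
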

We   systematize  the results of Proposition \ref{PR5}:
\begin{Remark}\label{R1}
The elements of the inverse matrix of \eqref{DDD} can be written in the following condensed form:
  \[
  f^{ij}=N\Big[\frac{D\gamma^3}{\delta}P_0+\gamma^2P_1+\gamma \delta I_2 P_2+(\delta I_2)^2 P_3+\frac{D(\delta I_2)^3}{\gamma}P_4\Big], 
    \]
  
\noindent where $N$ is defined in \eqref{NNN} and $ P_0,P_1,P_2,P_3,P_4$ are polynomials in $x\!=\!x_{ij}$ and $y\!=\!y_{ij}$. We have that $P_0=0$ for all $f^{ij}$, excepting $f^{55}$, for which $P_0=1$. Similarly, $P_4\neq 0$ only for the elements $f^{i5}$, $\forall i=1,...,5$.
\end{Remark}
\begin{Remark}\label{R2} The inverse matrix \eqref{DDD} is equivalent with the inverse matrix given in \eqref{316}. To check the consistency of our calculations, we computed some of the matrix elements with the two methods: the one in Proposition \ref{PR2} and the one in Proposition \ref{PR5}. For example, one can easily check that the element $f^{13}$ is equal with the element on line 1 and column 3 of $b_{11}$.
\end{Remark}

\subsection*{Acknowledgements}

This research was conducted in the framework of the
ANCS project programs PN 19 06 01 01/2019 and PN 23 21 01 01/2023.
We would like to thank  Marian Apostol for interest and suggestions.

\end{document}